\documentclass{ifacconf}

\usepackage{graphicx}      
\usepackage{natbib}        
\usepackage{amssymb}      
\usepackage{amsmath}
\usepackage{url}
\usepackage{color}
\usepackage{subcaption} 

\usepackage{float}
\usepackage{tcolorbox}
\tcbuselibrary{skins,breakable}
\usepackage{amssymb}  

\newtcolorbox{resp}[1][]{%
	enhanced jigsaw,%
	colback=gray!5!white,%
	colframe=gray!80!black,%
	size=small,%
	boxrule=1pt,%
	halign title=flush center,%
	coltitle=black,%
	breakable,%
	drop shadow=black!50!white,%
	attach boxed title to top left={xshift=1cm,yshift=-\tcboxedtitleheight/2,yshifttext=-\tcboxedtitleheight/2},%
	minipage boxed title=3cm,%
	boxed title style={%
		colback=white,%
		size=fbox,%
		boxrule=1pt,%
		boxsep=2pt,%
		underlay={%
			\coordinate (dotA) at ($(interior.west) + (-0.5pt,0)$);
			\coordinate (dotB) at ($(interior.east) + (0.5pt,0)$);
			\begin{scope}[gray!80!black]
				\fill (dotA) circle (2pt);
				\fill (dotB) circle (2pt);
			\end{scope}
		}%
	},%
	#1%
}

\allowdisplaybreaks

\newtheorem{theorem}{Theorem}
\newtheorem{lemma}{Lemma}
\newtheorem{remark}{Remark}
\newtheorem{definition}{Definition}
\newtheorem{assumption}{Assumption}
\newtheorem{corollary}{Corollary}
\newtheorem{problem}{Problem}
\newtheorem{example}{Example}

\def\nats{\mathbb{N}}

\def\AP{\mathit{AP}}

\newcommand{\LTLalways}{\Box}
\newcommand{\LTLeventually}{\Diamond}
\newcommand{\LTLuntil}{\mathbin{\sf U}}
\newcommand{\LTLnext}{\bigcirc}

\usepackage{color}

\begin{document}
\begin{frontmatter}

\title{Barrier Certificates for Uncertain Temporal Specifications} 

\thanks[footnoteinfo]{This work is supported by the following grants: EIC 101070802, and ERC 101089047.}

\author[First]{Mohammad H. Mamduhi}  
\author[First,Second]{Sadegh Soudjani} 

\address[First]{School of Computer Science, University of Birmingham, UK 
}
\address[Second]{Max Planck Institute for Software Systems, Germany 
}

\begin{abstract}                
This paper studies satisfying temporal logic specifications on stochastic dynamical systems, where the predicates evolve randomly over time. Such randomness may arise from uncertain environment models or external stochastic processes causing the sets associated with predicate satisfaction to vary in a non-deterministic manner. As a result, verifying whether a stochastic dynamical system satisfies a temporal specification depends also on the uncertainty in the predicates. We develop a certificate-based framework to bound the probability of satisfying temporal logic specifications with randomly evolving predicates. We first show that temporal logic specifications with stochastic predicates can be transformed to specifications with deterministic predicates on an augmented space which is extended to include the stochastic space of predicate's uncertainty. We then utilize barrier certificates on an augmented space to provide tractable optimization-based conditions and to avoid the computational burden of dynamic programming. Focusing on linear dynamics and safety-type specifications, we derive analytical conditions under which barrier certificates guarantee bounds on the probability of violating the stochastic safety predicates. The approach is demonstrated on numerical case studies.
\end{abstract}

\begin{keyword}
Temporal logic specifications, Barrier certificates, Uncertain specifications, Probabilistic safety guarantees, Augmented space.
\end{keyword}

\end{frontmatter}

\section{Introduction}

Ensuring safe operation of autonomous systems requires reasoning not only about their dynamics but also about complex task specifications that evolve over time. Temporal logic has emerged as a powerful framework for expressing such tasks, enabling formal descriptions of safety, sequencing, and reactive behaviors in a mathematically rigorous way \citep{Bellini2000}. Yet in real deployments, the truth values of the atomic predicates underlying these specifications are typically derived from perception modules that provide uncertain, probabilistic estimates of the environment \citep{sadigh2016safe}. An autonomous vehicle, for instance, rarely obtains an exact obstacle boundary or precise positions of surrounding agents; instead, it maintains stochastic representations of these quantities. Consequently, the atomic predicates used to evaluate a temporal logic formula may change randomly over time, and specification satisfaction becomes a probabilistic event rather than a deterministic one even for a given trajectory of the system.

Previous works on stochastic reachability problems (see, e.g. \citep{8882241, pmlr-v155-lew21a}) has shown that modeling environmental features as random sets provides a principled way to capture such uncertainties. These approaches offer tools for characterizing the probability of maintaining safety in the presence of stochastic disturbances or randomly varying safe sets. However, extending these ideas from single-step reach-avoid events to rich temporal logic specifications, and doing so in a scalable way, remains challenging. Dynamic programming–based methods can compute satisfaction probabilities, but their computational burden grows quickly with system dimension and specification  \citep{Soudjani2013adaptive,Survey2022automated}.

In this work, we study temporal logic specifications whose atomic predicates evolve according to stochastic processes, motivated by perception uncertainty in autonomous systems. Our objective is to bound the probability that a dynamical system satisfies such an uncertain specification. Direct computation is typically intractable due to the coupling between system trajectories and stochastic evolution of predicate sets.

To overcome this challenge, we introduce a certificate-based approach that provides computable and provable bounds on specification satisfaction probabilities. Specifically, we develop \emph{probabilistic barrier certificates} that allow us to reason about temporal logic satisfaction without enumerating all possible realizations of the random predicates or solving dynamic programming recursions. Our framework extends classical barrier-function ideas for safety verification to accommodate stochastic predicate dynamics and temporal operators in a unified way.

Focusing on linear systems and safety-type specifications, we derive analytic conditions under which barrier certificates yield bounds on the probability of violating stochastic safety predicates. These conditions lead to tractable computations and clarify how perception uncertainty propagates through the system to affect specification satisfaction. The resulting framework offers a computationally efficient alternative to dynamic-programming-based techniques while enabling the analysis of specifications that go beyond traditional reach-avoid formulations.

\noindent\textbf{Related Works.} Modeling task requirements with temporal logic specifications is a common method for the synthesis of safety-critical control systems \citep{4177895}. Control and planning of uncertain systems under temporal logic specifications have been explored mostly with external disturbances affecting the system dynamics \citep{Pian_etal_2024,schon2024bayesian}. Perception-based temporal logic planning problem is investigated for uncertain semantics maps that are continuously learned by semantic SLAM methods \citep{9708724}. Risk-based temporal logic specifications is studied in \cite{9103559,engelaar2024risk} by transforming stochastic risks to risk-tightened deterministic temporal logic constraints.

Barrier certificates have been used for ensuring satisfaction of temporal specifications for stochastic systems \citep{9157966}. Majority of the works focus on uncertain system dynamics \citep{SANTOYO2021109439}. Safety and reachability of stochastic hybrid systems have been addressed under random specifications using dynamic programming for controller synthesis \citep{SUMMERS20132906}. In \cite{9800957} barrier certificates are used to ensure probabilistic safety when system dynamics and barrier function are unknown and estimated from data by Gaussian processes or worst-case uncertainty in both the system and barrier function. Stochastic control barrier functions have been introduced to derive less conservative safety probability bounds for stochastic systems under deterministic safety specifications \citep{9683349}. 

To the best of our knowledge, none of these works apply barrier certificates on the case where temporal logic predicates evolve randomly and the specification is intrinsically stochastic. This work integrates stochastic perception-based task specification models, barrier-certificate methods, and temporal logic reasoning to provide a principled and scalable approach for quantifying safety guarantees under uncertain, randomly evolving specifications.

\textbf{Notations.} In this paper, $\{x_k\}_{k=0}^t$ denotes the sequence $x_0,\ldots, x_t$. The sets of natural, real, and non-negative real numbers are denoted by $\mathbb{N}$, $\mathbb{R}$, and $\mathbb{R}_0^+$, respectively, and $\mathbb N_0=\mathbb N\cup \{0\}$. A random variable $v\sim \mathcal D(\mu_v,\sigma_v^2)$ has the distribution $\mathcal D$ with the mean and variance of $\mu_v$ and $\sigma_v^2$. By $\oplus$, $\emptyset$, $\textsf{Tr}(\cdot)$, $(\cdot)^\top$, and $\textsf{E}_v(\cdot)$ we denote, respectively, the Minkowski summation operator, empty set, trace operator, transpose, and expectation with respect to a random variable $v$. We use the shorthand notation $(x,y)=[x^\top,y^\top]^\top$. For a vector $y\in \mathbb{R}^n$, and matrix $P\in \mathbb{R}^{n\times n}$, $\|y\|_P^2=y^\top Py$. For a square matrix $A$, $A\succeq 0$ ($\succ0$) denotes $A$ is positive semi-definite (positive definite), and $A\preceq 0$ ($\prec 0$) denotes $A$ is negative semi-definite (negative definite). For two sample spaces $S_1$ and $S_2$, $S_1\times S_2$ is the product space. For two sets $D_1$ and $D_2$, $D_1\cap D_2$ denotes their intersection, and $D_1\backslash D_2$ denotes $D_1$ excluding $D_2$.

\section{Model \& Problem Statement}
\noindent
\textbf{Dynamical System.}
We consider a discrete time dynamical system denoted as $\Lambda$ in a stochastic state space $X\subseteq \mathbb{R}^n$ associated with the Borel $\sigma$-algebra $\mathcal{B}(X)$, in the following form:
\begin{equation}\label{sys_model}
    \Lambda: x_{k+1}=f(x_k,u_k,w_k), \;\; k\in\mathbb{N}_0,
\end{equation}
where $x_k\in X$, $u_k\in \mathcal{U}$, and $w_k\sim \mathcal{D}(0,\sigma_w^2)$ are, respectively, the system's state, control input, and stochastic uncertainty with distribution $\mathcal D$. The control input space is denoted by a compact Borel set $\mathcal{U}\subseteq \mathbb{R}^m$. Then for each state $x\in X$ and control input $u\in \mathcal U$, we can assign a probability measure $ T (\cdot|x, u)$, where $ T:\mathcal B(X)\times X\times \mathcal U \rightarrow [0,1]$ is the controlled transition probability function. 

\noindent
\textbf{Linear Temporal Logic.}
We consider specifications in Linear Temporal Logic (LTL) \citep{baier2008principles}. 
Formulas in LTL are constructed inductively over a set of atomic propositions $\AP$ according to the syntax
\begin{equation*}
	\psi :=  p \mid \neg \psi \mid \psi_1 \wedge \psi_2 \mid \mathord{\bigcirc} \psi \mid \psi_1\LTLuntil \psi_2,
\end{equation*}
where $p\in\AP$. 
The semantics of LTL is defined on infinite sequences of elements from $2^{\AP}$.
Let $\rho=\rho_0,\rho_1,\dots$ be an infinite sequence of elements from $2^{\AP}$ and define $\rho[i]=\rho_i,\rho_{i+1},\dots$ for any $i\in \nats_0$. Then  
the satisfaction relation between $\rho$ and a property $\psi$, expressed in LTL, is denoted by $\rho\models\psi$. 
We have $\rho\models p$ if $p\in \rho_0$.
Furthermore,
$\rho\models \neg \psi$  if $\rho\not\models\psi$ and 
$\rho\models \psi_1\wedge\psi_2$ 
if $ \rho\models \psi_1$ and $\rho\models \psi_2$.
For \emph{next} operator, $\rho\models\mathord{\bigcirc}\psi$ holds if $\rho[1]\models\psi$.
The \emph{until} operator $\rho\models \psi_1\LTLuntil\psi_2$  holds if $ \exists i \in \mathbb N_0:$ $\rho[i] \models \psi_2, \mbox{and } 
\forall j \in \mathbb N_0, j<i, \rho[j]\models \psi_1
$.
We define derived operators such as disjunction ($\vee$), eventually ($\LTLeventually  $), and globally ($\LTLalways$) in the usual way.

\noindent
\textbf{Uncertain Labeling Functions.}
Consider a labeling function $L:X\times\Theta\rightarrow 2^\AP$ that maps any pair of states and parameters $(x,\theta)$ to a subset of atomic predicates, where $x\in X$ and $\theta\in \Theta$. For any sequence of parameters $\theta = \theta_0,\theta_1,\theta_2,\ldots$, the mapping $L$ maps any sequence of states $\xi = x_0,x_1,x_2,\ldots$ to a sequence $\rho =\rho_0,\rho_1,\rho_2,\ldots$ with $\rho_i = L(x_i,\theta_i)$. Using this labeling function, we can define satisfaction relation between a trajectory $\xi$ and a specification $\psi$, denoted as $(\xi,\theta)\models \psi$.

The traditional definition of labeling function for LTL satisfaction is defined directly on the state space $X$ and does not include the parameter space $\Theta$. In this work, we aim to design control policies to satisfy specifications under uncertainty in the labeling function $L$ through $\theta$.

\begin{resp}
\begin{problem}
\label{prob_prob}
Suppose $\theta\in\Theta$ is randomly changing over time. Design a control policy $\{u_k\}_{k=0}^T$ for $\Lambda$ such that the probability of satisfying a given specification $\psi$ is at least a given threshold  $\varepsilon$, i.e.,
\begin{equation}
    \mathsf{P}_{w,\theta}\{(\xi,\theta)\models \psi\}\geq \varepsilon.
\end{equation}
\end{problem}
\end{resp}

\medskip
\begin{example}[Safety Specifications]
Consider an uncertain safety specification with uncertain initial and unsafe sets as follows. Take the uncertain parameter $\theta = (\theta_{\textsf{i}},\theta_{\textsf{u}})\in \Theta_{\textsf{i}}\times \Theta_{\textsf{u}}$, where $\theta_{\textsf{i}}$ affects the initial set, and $\theta_{\textsf{u}}$ affects the unsafe set.
Define $X_{\textsf{i}}(\cdot)\!:\! \Theta_{\textsf{i}} \!\rightarrow\! \mathcal{B}(X)$ as the uncertain initial set, and $X_{\textsf{u}}(\cdot)\!:\! \Theta_{\textsf{u}} \!\rightarrow \!\mathcal{B}(X)$ as the uncertain unsafe set. 
We can represent 
the stochastic safety specification $\Upsilon=(\Theta_{\textsf{i}},\Theta_{\textsf{u}},X_{\textsf{i}}, X_{\textsf{u}}, T)$, where $T$ is the safety specification horizon. The aim is to find the safety probability for $\Lambda$ under synthesized control inputs $\{u_k\}_{k\in \mathbb{N}}$ that satisfies $\Upsilon$, i.e., starting from any initial state $x_0\in X_{\textsf{i}}(\theta_{\textsf{i}})$, system trajectories should avoid entering the unsafe set $X_{\textsf{u}}(\theta_{\textsf{u}})$ over the entire horizon $T$ with a probability $\varepsilon\in (0,1)$.

The uncertain safety specification fits the above definition of LTL satisfaction as follows. Define the labeling function $L:X\times\Theta_{\textsf{i}}\times\Theta_{\textsf{u}}\rightarrow 2^{AP}$ with $AP = \{\textsf{Init},\textsf{Unsafe}\}$ such that $L(x,\theta_{\textsf{i}},\theta_{\textsf{u}}) = \textsf{Init}$ iff $x\in X_{\textsf{i}}(\theta_{\textsf{i}})$, and $L(x,\theta_{\textsf{i}},\theta_{\textsf{u}}) = \textsf{Unsafe}$ iff $x\in X_{\textsf{u}}(\theta_{\textsf{u}})$.
Also define the safety specification in LTL notation as $\Upsilon = \textsf{Init} \wedge\left(\wedge_{n=0}^T\LTLnext^n \neg\textsf{Unsafe}\right)$. 
\end{example}

In the next section, we show how to tackle Problem~\ref{prob_prob} by assuming probability kernels for the random parameters.


\section{Solution Approach}
\label{sec:solution}

Here we express the dynamics of $\Lambda$ in \eqref{sys_model} transformed on the extended state space including the stochastic space of random set parameter $\theta$. The main advantage of this transformation, as we discuss in detail in the next section, is that the challenging problem of designing safety controllers for $\Lambda$ with stochastic set parameter can be solved equivalently by taking expectation of stochastic events on deterministic sets defined on the extended state space. 

\textbf{Modeling Random Parameters.}
Solving Problem~\ref{prob_prob} requires formalizing the probabilistic behavior of the random parameters.
This section provides the details of a general and realistic model for uncertainty and shows how to solve the problem by constructing an augmented system with an extended state space.

\begin{assumption}
Consider the sequence of uncertain parameters $\theta = \theta_0,\theta_1,\ldots$, where the initial parameter $\theta_0$ follows a given distribution $\mathcal D_0$, and the subsequent parameters $\theta_{k+1}$ are generated randomly according to a conditional probability kernel $T_\theta: \Theta\times\mathcal{B}(\Theta)\rightarrow [0,1]$, where $\theta_{k+1}\sim T_\theta(\theta_k,\cdot)$ for all $k\in\mathbb N_0$. \qed
\end{assumption}

\begin{remark}
In perception-based setting, the probability kernel $T_{\theta}$, which determines the probabilistic evolution of $\theta$, can be obtained by parameterizing or learning the accuracy and reliability of the perceptive sensors and their correlations with the geometric distance between the sensors (e.g., camera) and the object (e.g., a wall).\qed
\end{remark}

\textbf{Augmented System.}
We use the following lemma to equivalently model the parameter sequence as a dynamical system and build an augmented system.

\begin{lemma}\label{lemma1}\cite[Lemma 2.22/3.22]{Bak:Kallenberg}
Let a discrete time Markov process $\{\pi_k\}_{k\in\mathbb{N}}$ with transition kernel $T(\varpi,A)=\textsf{P}[\pi_{k+1}\in A| \pi_k=\varpi]$ be defined on a Borel space. Then there exists a measurable function $h:Y\times [0,1]\rightarrow Y$, and the i.i.d. uniformly distributed process $\{v_k\}_{k\geq 1}$ on $[0,1]$ such that $\pi_{k+1}=h(\pi_k,v_{k+1}), \forall k\geq 0$ produces a process with the same same distribution, and transition kernel $T$.\qed
\end{lemma}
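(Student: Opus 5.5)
The plan is to prove Lemma~\ref{lemma1} by the standard randomization argument: first reduce to kernels on $[0,1]$ through a Borel isomorphism, then obtain $h$ from the conditional quantile transform, and finally check by induction that the recursion reproduces the law of the Markov process.

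\textbf{Reduction to $[0,1]$.} Since $Y$ is a Borel space, there is a Borel set $B\subseteq[0,1]$ and a bimeasurable bijection $\iota:Y\to B$. Pushing the kernel forward gives
\[
\tilde T(\varpi,\cdot)=T(\varpi,\cdot)\circ\iota^{-1},
\]
which is a kernel from $Y$ to $[0,1]$ concentrated on $B$. It therefore suffices to build a measurable $\tilde h:Y\times[0,1]\to[0,1]$ such that $\tilde h(\varpi,v)$ has law $\tilde T(\varpi,\cdot)$ whenever $v$ is uniform. We then set $h=\iota^{-1}\circ\tilde h$. On the Lebesgue-null set of $v$ where $\tilde h(\varpi,v)\notin B$, we redefine $h$ to be a fixed point $y_0\in Y$.

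\textbf{Quantile construction.} Define $\tilde h$ as the conditional quantile function
\[
\tilde h(\varpi,v)=\inf\{x\in[0,1]:\tilde T(\varpi,[0,x])\ge v\}.
\]
Two facts must be checked.
\begin{itemize}
\item The law is correct. By right-continuity of the distribution function $F_\varpi(x)=\tilde T(\varpi,[0,x])$, we have $\tilde h(\varpi,v)\le x$ if and only if $v\le F_\varpi(x)$. Hence $\textsf{P}[\tilde h(\varpi,v)\le x]=F_\varpi(x)$.
\item The map is jointly measurable. For each $x$, the function $\varpi\mapsto F_\varpi(x)$ is measurable because $\tilde T$ is a kernel. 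So the set $\{(\varpi,v):v\le F_\varpi(x)\}$ is product-measurable. Sets of the form $\{\tilde h\le x\}$ generate the Borel $\sigma$-algebra, which gives joint measurability of $\tilde h$.
\end{itemize}
I expect joint measurability to be the main technical point. It is exactly what lets $h$ be a single measurable function rather than a family of functions indexed by $\varpi$, and it is also where the Borel-space hypothesis is essential.

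\textbf{Matching the process law.} Take $\tilde\pi_0\overset{d}{=}\pi_0$ and $\{v_k\}$ i.i.d.\ uniform, independent of $\tilde\pi_0$, and define $\tilde\pi_{k+1}=h(\tilde\pi_k,v_{k+1})$. Then $v_{k+1}$ is independent of $(\tilde\pi_0,\dots,\tilde\pi_k)$, which are functions of $\tilde\pi_0,v_1,\dots,v_k$. Fubini then gives
\[
\textsf{P}[\tilde\pi_{k+1}\in A\mid\tilde\pi_0,\dots,\tilde\pi_k]=\int_0^1 1_A\bigl(h(\tilde\pi_k,v)\bigr)\,dv=T(\tilde\pi_k,A).
\]
So $\{\tilde\pi_k\}$ is Markov with kernel $T$. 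Finite-dimensional distributions are determined inductively by the initial law and $T$, so by Kolmogorov/Ionescu--Tulcea uniqueness the two processes have the same distribution.

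Applied to $\{\theta_k\}$, this yields the representation $\theta_{k+1}=h(\theta_k,v_{k+1})$. This is the representation used to build the augmented system.
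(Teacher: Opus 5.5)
Your proposal is correct and is essentially the standard proof of the result the paper cites (Kallenberg's randomization lemma, extended to Markov chains by iteration). That proof reduces to $[0,1]$ via a Borel isomorphism, takes the conditional quantile $h$ (jointly measurable because $\{h\le x\}=\{v\le F_\varpi(x)\}$), and then drives the recursion with i.i.d.\ uniforms, which reproduces the initial law and the kernel and hence the law of the process. The paper gives no proof beyond this citation, so your write-up supplies the details of the same argument.
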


According to Lemma \ref{lemma1}, we can express the dynamics of the random state $\theta$ as
\begin{equation}
    \theta_{k+1}=h(\theta_k, v_{k+1}), \;\;v_k\sim U(0,1), \;\forall k.
\end{equation}

For any state $x\in X$, we define the discrete time augmented state $z\triangleq(x,\theta)$ on the product space $\Xi=X\times \Theta$. Denoting the augmented system with the augmented state $z$ by $\bar\Lambda$ which evolves on the product space $\Xi$, we have
\begin{equation}
    \bar\Lambda: z_{k+1}=(x_{k+1},\theta_{k+1})=\begin{cases}f(x_k,u_k,w_k),\\h(\theta_k, v_{k+1}),\end{cases} \; z_k\in \Xi.
\end{equation}
The augmented system is hence defined by the augmented tuple $\bar\Lambda=(\Xi, \mathcal U, f,h, w,v)$.

The stochastic transition kernel for the augmented state $z$ is a time-varying probability transition $\bar{T}: \mathcal B(X)\times \Xi \times \mathcal U\rightarrow [0,1]$ such that
\begin{equation}\label{trans_ker}
    \bar{T}_k(z_{k+1}|z_k, u_k)=T (x_{k+1}|x_k,u_k)T_{\theta}(\theta_{k+1}|\theta_k), \; \forall k.
\end{equation}
The transition probabilities are controlled by the control input $u\in \mathcal U$. Let a control policy $\{u_k\}_{k\in \mathbb{N}_0}$ be a sequence of measurable maps $u_k: \Xi \rightarrow \mathcal{U}$. Under this policy and starting from the initial state $z_0=(x_0, \theta_0)\in \Xi$, the evolution of the augmented system $\bar\Lambda$ over the time horizon $[0,T]$ is fully captured by the stochastic process $\{z_k=(x_k,\theta_k)\}_{k=0}^T$ on the sample space $\Omega=\Xi^{T+1}$ with the associated $\sigma$-algebra $\mathcal B(\Omega)$.



\textbf{Solution Approach.} So far, we transformed the system evolution from the state space $X$ to the product space $\Xi$ by augmenting the state vector and extending the state space. 
Depending on the type of the specification, techniques from the literature can be used to solve Problem~\ref{prob_prob} on the augmented system $\bar\Lambda=(\Xi, \mathcal U, f,h, w,v)$. The available approaches utilize dynamic programming characterizations for safety and reachability specifications \citep{SUMMERS20132906,Soudjani2013adaptive}, abstraction to interval Markov decision processes \citep{zhang2024formal} for handling all LTL specifications, and certificate-based techniques for safety and other temporal specifications \citep{9157966,Abate_et.al.}. 

While all the aforementioned techniques are generally applicable to the augmented system $\bar\Lambda$, we will focus on barrier certificates and safety specifications as a scalable discretization-free approach. The main difficulty in applying barrier-based approaches to $\bar\Lambda$ is to handle the random initial parameter. While barrier-based approaches assume a given set of initial states and provide a lower bound on satisfaction probability that holds for all such initial states, this is not directly applicable to the augmented system since the initial parameter follows a given distribution $\theta_0\sim\mathcal D_0$. To handle this challenge, the barrier certificate and the lower bound on the satisfaction probability can be computed as a function of the initial parameter $\theta_0$. The expectation of such a lower bound with respect to $\mathcal D_0$ will give a solution to Problem~\ref{prob_prob}.  

To formalize the details, the goal is to find a control input sequence $\{u_k\}_{k=0}^T$ that guarantees probabilistic safety for the augmented system $\bar\Lambda$ over the horizon $[0,T]$ on the sample space $\Omega$. In fact, we aim to synthesize $\{u_k\}_{k=0}^T$ such that the evolution of the controlled state $\{x_k\}_{k=0}^T$ starting from the initial set $X_{\textsf{i}}(\theta_{0})$, and under the control policy $u$, is probabilistically guaranteed to remain in $X\backslash X_{\textsf{u}}(\theta)$ over the horizon $[0,T]$, i.e., for $\varepsilon \in(0,1]$, we look for
\begin{equation}\label{safe_prob1}
   \!\! p_{safe}=\!\textsf{P}_{\!u, z_0}\{x_k\!\notin\! X_{\textsf{u}}(\theta), \forall k\!\in\! [0,T]\big| x_0\!\in\!  X_{\textsf{i}}(\theta_0)\}\geq \varepsilon.
\end{equation}
By defining two deterministic sets $\bar X_{\textsf{u}}=\{(x,\theta)\in \Xi \;|\; x\in X_{\textsf{u}}(\theta)\}$ and $\bar X_{\textsf{i}}=\{(x,\theta)\in \Xi \;|\; x_0\in X_{\textsf{i}}(\theta_0)\}$ on the product space $\Xi$, the safety probability with random sets becomes equivalent to the safety probability with deterministic sets on the augmented system.
We can, therefore, solve the equivalent deterministic problem on the extended state space, with the augmented system $\bar\Lambda=(\Xi, \mathcal U, f, h, w, v)$, and under the deterministic specification $\bar\Upsilon=(\bar X_{\textsf{i}}, \bar X_{\textsf{u}}, T)$. 

In the next section, we provide the analytical derivations for linear systems and specific models of the random sets.



\section{Linear Systems with Uncertain Specifications}
\label{sec:ext_space}


\textbf{Uncertain Temporal Specification Model.} Let $X_{\textsf{i}}^{\text{no}}$ and $X_{\textsf{u}}^{\text{no}}$ be two convex regions with shape kernels $R_{\textsf{i}}$ and $R_{\textsf{u}}$, respectively, and centered at $\bar c_{\textsf{i}}$ and $\bar c_{\textsf{u}}$. We can express the deterministic nominal initial and unsafe sets as
\begin{equation*}
   X_{\textsf{i}}^{\text{no}}= \bar{c}_{\textsf{i}} \oplus \bar{s}_{\textsf{i}}R_{\textsf{i}},\;\;\;\;\; X_{\textsf{u}}^{\text{no}}= \bar{c}_{\textsf{u}} \oplus \bar{s}_{\textsf{u}}R_{\textsf{u}},
\end{equation*}
where $\bar{s}_{\textsf{i}}$ and $\bar{s}_{\textsf{u}}$ are the sizes of shape kernels $R_{\textsf{i}}$ and $R_{\textsf{u}}$. We further assume that $X_{\textsf{i}}^{\text{no}}\cap X_{\textsf{u}}^{\text{no}}=\emptyset$.

Consider a random variable $\theta_{\textsf{i}}\sim\mathcal{D}_{\textsf{i}}(\mu_{\textsf{i}}, \sigma_{\textsf{i}}^2)$ defined on a Borel space $\Theta_{\textsf{i}}$. Moreover, let $\theta_{\textsf{u}}=\{\theta_{\textsf{u},k}\}_{k\in\mathbb{N}}$ be a random process defined on a Borel space $\Theta_{\textsf{u}}$, where for each $k$, $\theta_{\textsf{u},k}$ is a random variable selected from a known distribution $\mathcal{D}_{\textsf{u}}(\mu_{\textsf{u}}, \sigma_{\textsf{u}}^2)$, and the elements in $ \{\theta_{\textsf{u},k}\}_{k\in\mathbb{N}}$ are mutually independent and identically distributed (i.i.d.). We also assume that $\theta_{\textsf{i}}$ is independent of the random elements in the process $\{\theta_{\textsf{u},k}\}_{k=0}^T$. 

We define Borel-measurable kernels $T_{\textsf{i}}: \mathcal{B}(\Theta_{\textsf{i}})\times \Theta_{\textsf{i}}\rightarrow [0,1]$ and $T_{\textsf{u}}:\mathcal{B}(\Theta_{\textsf{u}})\times \Theta_{\textsf{u}}\rightarrow [0,1]$ as probability measures\footnote{Since $\{\theta_{\textsf{u},k}\}_{k=0}^T$ are i.i.d., they have identical and unconditional probability transition kernels.}, respectively, for $\theta_{\textsf{i}}\in \Theta_{\textsf{i}}$ and $\theta_{\textsf{u},k}\in \Theta_{\textsf{u}}$ on their corresponding Borel spaces $(\Theta_{\textsf{i}},\mathcal{B}(\Theta_{\textsf{i}}))$ and $(\Theta_{\textsf{u}},\mathcal{B}(\Theta_{\textsf{u}}))$.

Define $\theta\triangleq\{\theta_{\textsf{i}},\theta_{\textsf{u}} \}\in \Theta=\Theta_{\textsf{i}}\times \Theta_{\textsf{u}}$, with the transition kernel $T_{\theta}(\cdot)=T_{\textsf{i}}(\cdot)T_{\textsf{u}}(\cdot)$. Since $\theta$ includes i.i.d. random variables, the transition kernel $T_\theta$ becomes unconditional, i.e., independent of its past realizations\footnote{Note that $\theta_{\textsf{i}}$ is a single independent random variable, while $\theta_{\textsf{u}}$ is a random process with i.i.d. states $\theta_{\textsf{u},k}$, for each $k$.}. The uncertain initial and unsafe sets can then be expressed as
    \begin{subequations}
    \begin{align}\label{X_0_theta}
    X_{\textsf{i}}(\theta_{\textsf{i}})&= \bar{c}_{\textsf{i}} \oplus (\bar{s}_{\textsf{i}}+\theta_{\textsf{i}})R_{\textsf{i}}, \\\label{X_u_theta}
    X_{\textsf{u}}(\theta_{\textsf{u}})&= \bar{c}_{\textsf{u}} \oplus (\bar{s}_{\textsf{u}}+\theta_{\textsf{u}})R_{\textsf{u}}.
\end{align}
  \end{subequations}
Temporal safety specification is then described as starting from the initial set $X_{\textsf{i}}(\theta_{\textsf{i}})$, avoid entering the unsafe set $X_{\textsf{u}}(\theta_{\textsf{u}})$. Equivalently, we can write the safety specification in the short tuple-format as $\Upsilon =(X_{\textsf{i}}(\theta_{\textsf{i}}),X_{\textsf{u}}(\theta_{\textsf{u}}), T)$, or in the LTL formula by $\Upsilon = \textsf{Init} \wedge\left(\wedge_{n=0}^T\LTLnext^n \neg\textsf{Unsafe}\right)$.

%


Next, we construct our augmented stochastic linear time-invariant (LTI) system on the extended state space, and then we define a modified control barrier function (CBF) to probabilistically guarantee the separation of augmented system trajectories from the unsafe set. 

\textbf{CBF for Augmented Stochastic LTI Systems.} Since our focus in this work is on modifying CBF with respect to augmented systems on extended stochastic state spaces, we restrict our attention on discrete-time stochastic linear time-invariant (SLTI) systems of the following form
\begin{equation}\label{sys_model_lin}
    \Lambda_{\text{SLTI}}:x_{k+1}=Ax_k+Bu_k+w_k,
\end{equation}
where $x_k\in \mathbb R^n$, $u_k\in \mathbb R^m$, $A$ and $B$ are constant matrices of appropriate dimensions, and $w_k\sim\mathcal N(0,\sigma_w^2)$ is an i.i.d. random process. We fix the control policy to be linear state feedback, i.e., $u_k=Lx_k\in\mathcal U, k\in[0,T-1]$, where $L$ is the control gain to be designed such that $\Lambda_{\text{SLTI}}$ is safe with respect to the specification $\Upsilon$. We first express our augmented system $\bar\Lambda_{\text{SLTI}}$, with the augmented state vector $z_k\triangleq(x_k,\theta_k)$, where $\theta_k=(\theta_{\textsf{i}}, \theta_{\textsf{u},k})$. Recalling that $\theta_{\textsf{i}}$ is a random variable that is selected once at time $k=0$ (not evolving with time), and that $\theta_{\textsf{u}}$ is a sequence of i.i.d. random variables $\{\omega_k\}_{k\in\mathbb N}$ (see \textit{Lemma 1}), we derive the evolution model for the set random parameter $\theta$
\begin{align*}
    \theta_{k+1}=\begin{pmatrix}
        \theta_{\textsf{i}}\\\theta_{\textsf{u},k+1}\end{pmatrix}=\begin{pmatrix}
        \theta_{\textsf{i}}\\\omega_{k+1}
    \end{pmatrix}\triangleq \bar{\omega}_k,
\end{align*}
where the last expression above is a valid definition due to the time homogeneity of the random sequence $\{\omega_k\}_{k\in\mathbb N}$. Now for the augmented state $z_k$ we have
\begin{equation*}
  \bar\Lambda_{\text{SLTI}}\!:\!  z_{k+1}=\underbrace{\!\begin{pmatrix} \!A_{cl} &\mathbf{0}_{n\times 2n} \!\\ \!\mathbf{0}_{2n\times n}&\mathbf{0}_{2n\times 2n}\!\end{pmatrix}\!}_{\bar A}z_k+\underbrace{\begin{pmatrix}I_{n} & \mathbf{0}_{n\times 2n}\\\mathbf{0}_{2n\times n}& I_{2n}
  \end{pmatrix}}_{\bar D}\!\!\begin{pmatrix} w_k\\ \bar{\omega}_k\end{pmatrix}\!,
\end{equation*}
with $A_{cl}=A+BL$. Therefore, we can the write the augmented model in a compact form as
\begin{equation}\label{aug_sys_LTI2}
     \bar\Lambda_{\text{SLTI}}:  z_{k+1}=\bar{A}z_k+\bar{D}\mathsf{w}_k, \;\;\mathsf{w}_k=(w_k, \bar{\omega}_k).
\end{equation}
Now define the deterministic initial and unsafe sets as
\begin{subequations}
    \begin{align}\label{X_0}
        \bar{X}_{\textsf{i}}&=\{z=(x,\theta)\in \Xi\times \mathcal{U} \;|\; x\in X_{\textsf{i}}(\theta_{\textsf{i}})\},\\\label{X_u}
        \bar{X}_{\textsf{u}}&=\{z=(x,\theta)\in \Xi\times \mathcal{U} \;|\; x\in X_{\textsf{u}}(\theta_{\textsf{u}})\},
    \end{align}
\end{subequations}
and define the deterministic temporal specification on the augmented space as $\bar\Upsilon=(\Theta_{\textsf{i}}, \Theta_{\textsf{u}}, \bar X_{\textsf{i}},  \bar X_{\textsf{u}}, T)$.

Before constructing the CBF for the augmented system $\bar\Lambda_{\text{SLTI}}$, we need to address the probability of overlap between the initial and the unsafe sets $X_{\textsf{i}}(\theta_{\textsf{i}})$ and $X_{\textsf{u}}(\theta_{\textsf{u}})$.

\begin{remark}
Due to unsupported ranges of stochastic variables $\theta_{\textsf{i}}$ and $\theta_{\textsf{u},k}$, there is a non-zero probability that $X_{\textsf{i}}(\theta_{\textsf{i}}) \cap X_{\textsf{u}}(\theta_{\textsf{u},0})\neq  \emptyset$, which violates the safety condition for the initial state. Hence no barrier level sets can be found to separate the unsafe set from the initial set. Since in our setting, overlapping event occurs with non-zero probability, we restrict the definition of the CBF given the condition $X_{\textsf{i}}(\theta_{\textsf{i}}) \cap X_{\textsf{u}}(\theta_{\textsf{u},0})=\emptyset$ holds. \qed
\end{remark}

We analytically derive $p_{\emptyset}=\textsf{P}_{\theta_{\textsf{i}},\theta_{\textsf{u}}}[X_{\textsf{i}}(\theta_{\textsf{i}}) \cap X_{\textsf{u}}(\theta_{\textsf{u},0})=  \emptyset]$, which is the probability that safety specifications are separable at $k=0$ by any barrier function. For the ease of presentation and derivation, in the rest of this paper we assume $R_{\textsf{i}}=R_{\textsf{u}}$, i.e., the initial and unsafe sets are of identical shape. The results of this work extend to the case $R_{\textsf{i}}\neq R_{\textsf{u}}$ as long as $R_{\textsf{i}}$ and $R_{\textsf{u}}$ remain convex.

\begin{lemma}
Consider the uncertain initial and unsafe sets as defined in \eqref{X_0_theta} and \eqref{X_u_theta}, with $R_{\textsf{i}}=R_{\textsf{u}}$, $\theta_{\textsf{u}}=\{\theta_{\textsf{u},k}\}_{k\in[0,T]}\sim\mathcal N(0,\sigma_{\textsf{u}}^2), \forall k$, and $\theta_{\textsf{i}}\sim\mathcal N(0,\sigma_{\textsf{i}}^2)$. Let $F_{\theta_{\textsf{i}}}$ and $F_{\theta_{\textsf{u}}}$ be the cumulative distribution functions (CDF) of $\theta_{\textsf{i}}$ and the $k^{\textsf{th}}$ element\footnote{Since elements of $\theta_{\textsf{u}}$ are i.i.d., their CDFs are identical, therefor, we denote $\text{CDF}(\theta_{\textsf{u},k})=F_{\theta_{\textsf{u}}}, \forall k$.} of $\theta_{\textsf{u}}$ (i.e., $\theta_{\textsf{u},k}$), respectively. Then the probability of overlap between $X_{\textsf{u}}(\theta_{\textsf{u}})$ and $X_{\textsf{i}}(\theta_{\textsf{i}})$ at any time $k$ including $k=0$, denoted by $p_k^{ol}$, is 
\begin{align*}
    p_k^{ol}&=\textsf{P}_{\theta_{\textsf{i}},\theta_{\textsf{u}}}[X_{\textsf{u}}(\theta_{\textsf{u},k})\cap X_{\textsf{i}}(\theta_{\textsf{i}})\neq \emptyset]\\&=1-F_{\theta_{\textsf{i}}+\theta_{\textsf{u}}}(d_R-(\bar s_{\textsf{u}}+\bar s_{\textsf{i}})),
\end{align*}
where $F_{\theta_{\textsf{i}}+\theta_{\textsf{u}}}$ is the CDF of $\theta_{\textsf{i}}+\theta_{\textsf{u},k}$, and $d_R=\|\bar c_{\textsf{u}}-\bar c_{\textsf{i}}\|_R=\inf\{\lambda\geq 0: \bar c_{\textsf{u}}-\bar c_{\textsf{i}}\in \lambda R\}$ is the $R$-norm measured distance between the centers of the sets.                          
\end{lemma}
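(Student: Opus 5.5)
The plan is to turn the set-overlap event into a scalar inequality on $\theta_{\textsf{i}}+\theta_{\textsf{u},k}$ using Minkowski-sum geometry, and then read off its probability from the distribution of that sum. We assume, as is implicit in the definition of $d_R$, that $R:=R_{\textsf{i}}=R_{\textsf{u}}$ is a convex, compact, centrally symmetric set containing the origin in its interior. Then $\|\cdot\|_R$ is a norm and $\lambda R=\{y:\|y\|_R\le\lambda\}$ for $\lambda\ge 0$. We also assume the effective sizes $a=\bar s_{\textsf{i}}+\theta_{\textsf{i}}$ and $b=\bar s_{\textsf{u}}+\theta_{\textsf{u},k}$ are nonnegative, so that the sets in \eqref{X_0_theta}--\eqref{X_u_theta} are well defined. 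Negative sizes have to be dealt with separately, for instance by treating the corresponding set as empty or by truncation; with Gaussian parameters this is the one technical point that must be stated.

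\textbf{Step 1 (deterministic overlap criterion).} For fixed $a,b\ge 0$ we show that $(\bar c_{\textsf{i}}\oplus aR)\cap(\bar c_{\textsf{u}}\oplus bR)\neq\emptyset$ if and only if $\bar c_{\textsf{u}}-\bar c_{\textsf{i}}\in aR\oplus(-bR)$. By symmetry $-bR=bR$, and by convexity $aR\oplus bR=(a+b)R$. Hence the sets overlap iff $d_R=\|\bar c_{\textsf{u}}-\bar c_{\textsf{i}}\|_R\le a+b$. Since $R$ is compact, the infimum defining $d_R$ is attained, so the inequality is non-strict. Convexity gives the inclusion $aR+bR\subseteq(a+b)R$, and the reverse inclusion is immediate from $(a+b)r=ar+br$.

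\textbf{Step 2 (probabilistic translation).} Substituting $a$ and $b$ gives
\[
\{X_{\textsf{u}}(\theta_{\textsf{u},k})\cap X_{\textsf{i}}(\theta_{\textsf{i}})\neq\emptyset\}=\{\theta_{\textsf{i}}+\theta_{\textsf{u},k}\ge d_R-(\bar s_{\textsf{u}}+\bar s_{\textsf{i}})\}.
\]
Its probability is therefore $1-\textsf{P}[\theta_{\textsf{i}}+\theta_{\textsf{u},k}<d_R-(\bar s_{\textsf{u}}+\bar s_{\textsf{i}})]$. Because $\theta_{\textsf{i}}$ and $\theta_{\textsf{u},k}$ are independent Gaussians, the sum is $\mathcal N(0,\sigma_{\textsf{i}}^2+\sigma_{\textsf{u}}^2)$, which is continuous. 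So the strict and non-strict inequalities have equal probability, and $p_k^{ol}=1-F_{\theta_{\textsf{i}}+\theta_{\textsf{u}}}(d_R-(\bar s_{\textsf{u}}+\bar s_{\textsf{i}}))$. The i.i.d. assumption on $\{\theta_{\textsf{u},k}\}$, together with $\theta_{\textsf{i}}$ being drawn once, makes this law the same for every $k$, including $k=0$.

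\textbf{Main obstacle.} The expected difficulty is Step 1: justifying that $aR\oplus bR=(a+b)R$ and that $-R=R$. Without central symmetry the difference set is $aR\oplus(-bR)$, and the clean formula only holds if $d_R$ is measured with respect to $R\oplus(-R)$ scaled appropriately. The plan is therefore to state the symmetry and convexity hypotheses explicitly, together with the handling of negative realized sizes. The probabilistic part is then routine.
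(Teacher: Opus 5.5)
\textbf{Overall.} Your proposal follows the same route as the paper's proof: reduce the overlap event to $\theta_{\textsf{i}}+\theta_{\textsf{u},k}\ge d_R-(\bar s_{\textsf{u}}+\bar s_{\textsf{i}})$, then read the probability off the CDF of the sum, whose law is the same for every $k$. What differs is rigor.

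\textbf{Where you improve on the paper.}
\begin{itemize}
\item The paper only asserts the geometric equivalence. Your Step~1 proves it via $\bar c_{\textsf{u}}-\bar c_{\textsf{i}}\in aR\oplus(-bR)=(a+b)R$.
\item You correctly flag central symmetry as a genuine extra hypothesis. For example, take $R=[-1,2]\subset\mathbb R$, $\bar c_{\textsf{i}}=0$, $\bar c_{\textsf{u}}=3.5$, $a=b=1$. Then $d_R=1.75\le a+b$, yet $[-1,2]\cap[2.5,5.5]=\emptyset$.
\item Your continuity argument handles the boundary cleanly. The paper uses $\le$ for non-overlap and $\ge$ for overlap at the same threshold.
\item You claim only identical marginals across $k$, which is all the lemma needs. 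The paper's extra assertion that overlap events at different times are independent is false, since $\theta_{\textsf{i}}$ is common to all of them.
\end{itemize}

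\textbf{The remaining problem: negative sizes.} This is not a formality. Step~1 assumes $a,b\ge 0$, while Step~2 uses the full Gaussian law $\mathcal N(0,\sigma_{\textsf{i}}^2+\sigma_{\textsf{u}}^2)$. Under that law $\mathsf{P}[\min(a,b)<0]>0$, and none of the conventions you suggest restores the identity:
\begin{itemize}
\item \emph{Literal Minkowski definition.} With symmetric $R$ we have $\lambda R=|\lambda|R$. So overlap is $\{|a|+|b|\ge d_R\}$, a strictly larger event than $\{a+b\ge d_R\}$, and the formula underestimates $p^{ol}_k$.
\item \emph{Clipping sizes at $0$.} This gives $\{a^++b^+\ge d_R\}$ with $a^+=\max(a,0)$, again strictly larger.
\item \emph{Declaring negative-size sets empty.} This gives $\{a,b\ge 0,\ a+b\ge d_R\}$, so the formula overestimates. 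Take, e.g., $a<0$ and $b>d_R-a$, which has positive probability.
\end{itemize}

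\textbf{What your argument actually proves, and the fix.} It shows that the overlap event and $\{a+b\ge d_R\}$ coincide on $\{a,b\ge 0\}$. So the formula is exact only when the sizes are almost surely nonnegative. Otherwise it is correct only up to an error of at most $\mathsf{P}[\min(a,b)<0]$. The clean fix is to state the lemma for almost surely nonnegative sizes, e.g.\ the half-normal model $\theta\sim|\mathcal N(0,\sigma^2)|$ with $\bar s\ge 0$, which the paper itself uses in the Corollary and the simulations. In that case Step~2 should not invoke $\mathcal N(0,\sigma_{\textsf{i}}^2+\sigma_{\textsf{u}}^2)$. It needs only independence and continuity of the law of $\theta_{\textsf{i}}+\theta_{\textsf{u},k}$, which is all your boundary argument uses. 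The paper's proof has the same defect and does not acknowledge it.
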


\textit{Proof.}
The condition $X_{\textsf{u}}(\theta_{\textsf{u},k})\cap X_{\textsf{i}}(\theta_{\textsf{i}})\neq \emptyset$ is equivalent to $d_R\leq \bar s_{\textsf{u}}+\theta_{\textsf{u},k}+ \bar s_{\textsf{i}}+\theta_{\textsf{i}}$, or with tweaking the parameters, is equivalent to $\theta_{\textsf{u},k}+ \theta_{\textsf{i}}\geq d_R-(\bar s_{\textsf{u}}+\bar s_{\textsf{i}})$. Define $\bar d\triangleq d_R-(\bar s_{\textsf{u}}+\bar s_{\textsf{i}})$. Since $\theta_{\textsf{i}}$ and $\theta_{\textsf{u},k}$ are independent $\forall k$, we have
\begin{align*}
    p_{\emptyset}=\mathsf{P}[\theta_{\textsf{i}}+\theta_{\textsf{u},k}\leq \bar d]=F_{\theta_{\textsf{i}}+\theta_{\textsf{u}}}(\bar d).
\end{align*}
Hence the overlap probability $p^{ol}_k=\mathsf{P}[\theta_{\textsf{i}}+\theta_{\textsf{u},k}\geq \bar d]=1-F_{\theta_{\textsf{i}}+\theta_{\textsf{u}}}(\bar d)$. Since $\{\theta_{\textsf{u},k}\}_{k\in[0,T]}$ are i.i.d. $\forall k$, and they are independent of $\theta_{\textsf{i}}$, and the fact that $\text{CDF}(\theta_{\textsf{u},k})=F_{\theta_{\textsf{u}}}, \forall k$, the overlap event at each time is independent of the previous  events, and overlap probabilities are equal $\forall k[0,T]$, and the proof is complete. \qed
    
We now define the modified CBF conditions for the augmented system $\bar\Lambda_{\text{SLTI}}$, with augmented state $z_k=(x_k,\theta_k)$ on the extended state space $\Xi=X\times \Theta$, as follows: 

\begin{definition}\label{def_1}
Let $\bar\Lambda_{\text{SLTI}}=(\Xi, \mathcal U, \bar A, \bar D, w, \bar\omega)$ be defined as in \eqref{aug_sys_LTI2} with a safety
specification $\bar\Upsilon=(\Theta_{\textsf{i}}, \Theta_{\textsf{u}}, \bar X_{\textsf{i}},  \bar X_{\textsf{u}}, T)$, and the control inputs $\{u_k=Lx_k\}_{k=0}^{T-1}$. Given $\bar X_{\textsf{i}}\cap\bar X_{\textsf{u}}=\emptyset$, we call a function
$B : \Xi\times \mathcal U \rightarrow R^+_0$ a $\theta$-averaged \textit{control barrier certificate (CBC)} for $\bar\Lambda_{\text{SLTI}}$ if $\eta>0$, $\beta\geq\eta$ and $c>0$ exist such that
\begin{subequations}
  \begin{align}
    &\textsf{E}_{\theta}[B(z)]\leq\eta, \; \forall z\in \bar X_{\textsf{i}}\times \mathcal{U}, \label{eq:a} \\
   & \textsf{E}_{\theta}[B(z)]\geq \beta, \; \forall z\in \bar X_{\textsf{u}}\times \mathcal{U}, \label{eq:b}\\
    &\textsf{E}_{\mathsf{w}}\left[B(z_{k+1})|z_k\right]\leq B(z_k)+c ,\; \forall z\in \Xi\times \mathcal{U}. \label{eq:c}
  \end{align}
\end{subequations}

Due to the stochasticity in $X_{\textsf{i}}(\theta_{\textsf{i}})$ and $X_{\textsf{u}}(\theta_{\textsf{u}})$, the conditions \eqref{eq:a}-\eqref{eq:b} are modified compared to the common CBF definition for deterministic specifications that does not involve $\textsf{E}_{\theta}[\cdot]$. Note that, in the conditions \eqref{eq:a}-\eqref{eq:b}, the expectation applies only with respect to $\theta$, while in \eqref{eq:c} the expectation applies with respect to $\mathsf{w}$.
\end{definition}

\section{Control Barrier Certificate \& Safety Guarantee}
In this section, given the augmented system $\bar\Lambda_{\text{SLTI}}$, the control policy $u_k=Lx_k$, and the safety specification $\bar\Upsilon$, we construct a CBC based on \textit{Definition \ref{def_1}} and under the randomness in the initial and unsafe sets. Before stating the main result of this work, we make the following crucial assumption:
\begin{assumption}[Feasibility]
    There exists a pair $(L,P)$ such that the following bilinear matrix inequality holds
    \begin{equation}
        \bar A^\top P \bar A -P \preceq 0.
    \end{equation}
\end{assumption}

\begin{theorem}\label{thm1}
    Let $\bar\Lambda_{\text{SLTI}}=(\Xi, \mathcal U, \bar A, \bar D, w, \bar\omega)$ be defined as in \eqref{aug_sys_LTI2}, controlled by sequence of inputs $\{u_k=Lx_k\}_{k=0}^{T-1}$, and \textit{Assumption 2} holds. Define $B(z)=z^\top Pz$, where $P\succ 0$ is of appropriate dimension. Then $\eta>0$, $\beta\geq\eta$, and $c>0$ exist such that $B(z)$ is a $\theta$-averaged CBC and $u_k$ is a safety controller for $\bar\Lambda_{\text{SLTI}}$ with respect to $\bar \Upsilon=(\Theta_{\textsf{i}},\Theta_{\textsf{u}},\bar X_{\textsf{i}}, \bar X_{\textsf{u}}, T)$. Moreover, we have
    \begin{subequations}
    \begin{align}\label{eta2}
        &\eta= p_{\emptyset}\lambda_{max}(P_x)\int_{\Theta_{\textsf{i}}}d_{\textsf{i}}^{max}(\theta_{\textsf{i}})^2 T_{\textsf{i}}(\theta_{\textsf{i}})d\theta_{\textsf{i}},\\\label{beta2}
        &\beta= p_{\emptyset}\lambda_{min}(P_x)\int_{\Theta_{\textsf{u}}}d_{\textsf{u}}^{min}(\theta_{\textsf{u}})^2 T_{\textsf{u}}(\theta_{\textsf{u}})d\theta_{\textsf{u}},\\\label{c2}
        & c=\textsf{Tr}\left(\bar D^\top P\bar D\;\textsf{E}\left[\mathsf{w}_k^\top\mathsf{w}_k\right]\right).
        \end{align}
    \end{subequations}  
\end{theorem}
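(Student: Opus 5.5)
We verify the three conditions of Definition~\ref{def_1} one at a time for the quadratic candidate $B(z)=z^\top P z$. Throughout we condition on the separability event $X_{\textsf{i}}(\theta_{\textsf{i}})\cap X_{\textsf{u}}(\theta_{\textsf{u},0})=\emptyset$, whose probability $p_{\emptyset}=F_{\theta_{\textsf{i}}+\theta_{\textsf{u}}}(\bar d)$ is given by the overlap lemma. This factor is what multiplies the integrals in \eqref{eta2}--\eqref{beta2}. We write $P_x$ for the upper-left $n\times n$ block of $P$, which is positive definite since $P\succ0$.

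\textbf{Conditions \eqref{eq:a} and \eqref{eq:b}.} Use the block structure to lower and upper bound $B$ by its $x$-part. For $z\in\bar X_{\textsf{i}}$, the $x$-component lies in $\bar c_{\textsf{i}}\oplus(\bar s_{\textsf{i}}+\theta_{\textsf{i}})R_{\textsf{i}}$. Define
\[
d_{\textsf{i}}^{max}(\theta_{\textsf{i}})=\sup\{\|x\|: x\in X_{\textsf{i}}(\theta_{\textsf{i}})\}, \qquad d_{\textsf{u}}^{min}(\theta_{\textsf{u}})=\inf\{\|x\|: x\in X_{\textsf{u}}(\theta_{\textsf{u}})\}.
\]
These are finite, and the infimum is attained, because $R$ is compact and convex. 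Then
\[
\lambda_{min}(P_x)\,d_{\textsf{u}}^{min}(\theta_{\textsf{u}})^2\le x^\top P_x x \le \lambda_{max}(P_x)\,d_{\textsf{i}}^{max}(\theta_{\textsf{i}})^2 .
\]
Taking $\textsf{E}_\theta$ against the kernels $T_{\textsf{i}}$ and $T_{\textsf{u}}$, restricted to the separable event (which contributes the factor $p_\emptyset$ by independence of $\theta_{\textsf{i}}$ and $\theta_{\textsf{u}}$), gives the integral forms \eqref{eta2} and \eqref{beta2}.

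\textbf{Main obstacle.} The delicate step is justifying that the $\theta$-block of $P$ can be neglected. This requires either choosing $P$ block-diagonal with the $\theta$-block absorbed, or noting that $\bar A$ annihilates the $\theta$-coordinates so that Assumption~2 only constrains $P_x$. A second difficulty is ensuring $\beta\ge\eta$. I would argue this from the nominal disjointness $X_{\textsf{i}}^{\text{no}}\cap X_{\textsf{u}}^{\text{no}}=\emptyset$ on the separable event: choose coordinates or shift so that the initial set lies nearer the origin than the unsafe set, and scale $P$ so that the condition number of $P_x$ is small enough. If that fails in general, I would state $\beta\ge\eta$ as the feasibility requirement it really is.

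\textbf{Condition \eqref{eq:c}.} Compute
\[
\textsf{E}_{\mathsf{w}}[B(z_{k+1})\mid z_k]=z_k^\top\bar A^\top P\bar A z_k+2z_k^\top\bar A^\top P\bar D\,\textsf{E}[\mathsf{w}_k]+\textsf{E}[\mathsf{w}_k^\top\bar D^\top P\bar D\mathsf{w}_k].
\]
The cross term vanishes if $\mathsf{w}_k$ is zero mean, since $w_k$, $\theta_{\textsf{i}}$ and $\omega_k$ are centered; otherwise it must be bounded. The last term equals the trace expression \eqref{c2}. Assumption~2 gives $z^\top\bar A^\top P\bar A z\le z^\top P z$, so \eqref{eq:c} holds with this $c>0$.

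\textbf{Safety guarantee.} Finally, invoke the standard finite-horizon supermartingale (Kushner/Ville type) bound for c-martingales to conclude
\[
\textsf{P}\{z_k\in\bar X_{\textsf{u}}\ \text{for some } k\le T\}\le (\eta+cT)/\beta .
\]
This shows that $u_k=Lx_k$ is a safety controller with $p_{safe}\ge 1-(\eta+cT)/\beta$ on the separable event.
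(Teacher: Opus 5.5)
Your overall route is the paper's: block-diagonal $P$, Assumption~2 giving $A_{cl}^\top P_xA_{cl}\preceq P_x$ and hence $c$, and eigenvalue bounds with $d_{\textsf{i}}^{max}$ and $d_{\textsf{u}}^{min}$, multiplied by $p_{\emptyset}$, for $\eta$ and $\beta$. However, several steps do not go through as written, and the paper's proof does not supply the missing ideas either.

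(i) In \eqref{eq:a} you bound $B$ from above by its $x$-part. But with $P=\mathrm{diag}(P_x,P_\theta)$ one has $B(z)=x^\top P_xx+\theta^\top P_\theta\theta\geq x^\top P_xx$. So $\lambda_{max}(P_x)d_{\textsf{i}}^{max}(\theta_{\textsf{i}})^2$ misses $\theta^\top P_\theta\theta$, whose $\theta$-average is strictly positive when $P\succ0$. Neither remedy you mention helps. Block-diagonality does not remove this term, and the fact that $\bar A$ annihilates the $\theta$-coordinates matters only for \eqref{eq:c}, whereas \eqref{eq:a} concerns $B(z_0)$ itself. The repair is to add $\textsf{E}[\theta_0^\top P_\theta\theta_0]$ to $\eta$, or to take $P_\theta=0$ (so only $P\succeq0$); neither yields the statement verbatim. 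The paper simply replaces $B(z)$ by $x^\top P_xx$ at this step.

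(ii) The factor $p_\emptyset$ does not follow from independence of $\theta_{\textsf{i}}$ and $\theta_{\textsf{u}}$. The separability event $\{\theta_{\textsf{i}}+\theta_{\textsf{u},0}\leq\bar d\}$ depends on $\theta_{\textsf{i}}$ itself, so the conditional mean of $d_{\textsf{i}}^{max}(\theta_{\textsf{i}})^2$ on that event is not the unconditional integral in \eqref{eta2}; the paper equates the two without comment. What works is Chebyshev's inequality for oppositely monotone functions. If the sets are nested in $\theta$ (e.g.\ $0\in R$ and $\bar s+\theta\geq0$), then for fixed $\theta_{\textsf{u},0}$ the map $\theta_{\textsf{i}}\mapsto d_{\textsf{i}}^{max}(\theta_{\textsf{i}})^2$ is nondecreasing and the indicator $\mathbf 1_{\text{sep}}$ of the event is nonincreasing. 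This gives $\textsf{E}[d_{\textsf{i}}^{max}(\theta_{\textsf{i}})^2\,\mathbf 1_{\text{sep}}]\leq p_\emptyset\int_{\Theta_{\textsf{i}}}d_{\textsf{i}}^{max}(\theta_{\textsf{i}})^2T_{\textsf{i}}(\theta_{\textsf{i}})d\theta_{\textsf{i}}$, exactly the direction you need. For \eqref{eq:b} at $k\geq1$, $\theta_{\textsf{u},k}$ is independent of the event and the factorization is exact.

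(iii) Your argument for $\beta\geq\eta$ cannot work. Both constants scale linearly with $P$, so scaling is useless. Since $\lambda_{max}(P_x)\geq\lambda_{min}(P_x)$, the inequality $\beta\geq\eta$ forces the data condition $\int_{\Theta_{\textsf{u}}}d_{\textsf{u}}^{min}(\theta_{\textsf{u}})^2T_{\textsf{u}}(\theta_{\textsf{u}})d\theta_{\textsf{u}}\geq\int_{\Theta_{\textsf{i}}}d_{\textsf{i}}^{max}(\theta_{\textsf{i}})^2T_{\textsf{i}}(\theta_{\textsf{i}})d\theta_{\textsf{i}}$, which nominal disjointness does not imply. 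Take the unit ball at $(5,0)$ as nominal initial set and the ball of radius $\tfrac12$ at $(2,0)$ as nominal unsafe set. Nonnegative inflations give $d_{\textsf{u}}^{min}\leq\tfrac32<6\leq d_{\textsf{i}}^{max}$, so $\beta<\eta$ for every $P_x$; and since $(2,0)=\tfrac25(5,0)$, no linear change of coordinates helps. Translating coordinates is not available either: $B=z^\top Pz$ is centered at the closed-loop equilibrium, and recentering it puts a term linear in $x_k$ into \eqref{eq:c}. So your fallback, treating $\beta\geq\eta$ as an extra hypothesis, is the right resolution. The paper's line ``since $\beta\leq\textsf{E}[B(z)]\leq\eta$, $\beta\geq\eta$ holds'' is a non sequitur: the two bounds concern disjoint sets, and chaining them would give $\beta\leq\eta$.

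Two further points. First, in \eqref{eq:c}, $\mathsf{w}_k$ is not independent of $z_k$, since $\bar\omega_k=(\theta_{\textsf{i}},\omega_{k+1})$ repeats the $\theta_{\textsf{i}}$ stored in $z_k$. The conditional noise term therefore contains $\theta_{\textsf{i}}^\top Q\,\theta_{\textsf{i}}$ itself, with $Q$ the $\theta_{\textsf{i}}$-block of $P_\theta$. Your bound $z^\top\bar A^\top P\bar Az\leq z^\top Pz$ cannot absorb this into a constant. The block form $z^\top\bar A^\top P\bar Az=x^\top A_{cl}^\top P_xA_{cl}x\leq z^\top Pz-\theta^\top P_\theta\theta$ does cancel it (with $P_\theta$ block-diagonal in $(\theta_{\textsf{i}},\theta_{\textsf{u}})$), so the constant in \eqref{c2} remains valid. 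For the cross term only $\textsf{E}[w_k]=0$ is needed, because $\bar A^\top P\bar D=\mathrm{diag}(A_{cl}^\top P_x,\mathbf 0)$; this matters since the paper's half-normal $\theta_{\textsf{i}},\theta_{\textsf{u},k}$ are not centered. Second, your closing appeal to the Kushner/Ville bound needs $B\geq\beta$ pointwise on $\bar X_{\textsf{u}}$. The $\theta$-averaged condition \eqref{eq:b} does not give this, so that step does not follow as stated.
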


\textit{Proof.} Let $P=\begin{pmatrix} P_x & \mathbf{0}_{n\times 2n} \\ \mathbf{0}_{2n\times n} & P_{\theta}\end{pmatrix}$. From \eqref{eq:c}, we have
\begin{align}\label{cond3}
\textsf{E}\left[B(z_{k+1})|z_k\right]&=\textsf{E}\left[z_{k+1}^\top P z_{k+1}|z_k\right]\\\nonumber
&=\textsf{E}\left[x_k^\top A_{cl}^\top P_x A_{cl}x_{k}|z_k\right]\!+\textsf{E}\left[\mathsf{w}_k^\top\bar D^\top P\bar D  \mathsf{w}_k \right]\\\nonumber
&=x_k^\top A_{cl}^\top P_x A_{cl}x_{k}+\textsf{Tr}\left(\bar D^\top P\bar D\;\textsf{E}\left[\mathsf{w}_k^\top\mathsf{w}_k\right]\right).
\end{align}
From \textit{Assumption 2}, it follows that
    \begin{equation*}
        \bar A^\top P \bar A -P=\begin{pmatrix} A_{cl}^\top & \mathbf{0} \\ \mathbf{0} & \mathbf{0}\end{pmatrix} \begin{pmatrix} P_x & \mathbf{0} \\ \mathbf{0} & P_{\theta}\end{pmatrix}\begin{pmatrix} A_{cl} & \mathbf{0} \\ \mathbf{0} & \mathbf{0}\end{pmatrix}-\begin{pmatrix} P_x & \mathbf{0} \\ \mathbf{0} & P_{\theta}\end{pmatrix}\preceq 0,
    \end{equation*}
which leads to the conditions $A_{cl}^\top P_x A_{cl}-P_x\preceq 0$, and $P_{\theta}\succ 0$ to be satisfied. From \eqref{eq:c} and \eqref{cond3}, we have
\begin{align*}
    x_k^\top (\underbrace{A_{cl}^\top P_x A_{cl}-P_x}_{\preceq0})x_{k}+\textsf{Tr}\left(\bar D^\top P\bar D\;\textsf{E}\left[\mathsf{w}_k^\top\mathsf{w}_k\right]\right)\leq c.
\end{align*}
Condition \eqref{eq:c} is therefore satisfied with
\begin{equation}\label{cond3_value}
    c=\textsf{Tr}\left(\bar D^\top P\bar D\;\textsf{E}\left[\mathsf{w}_k^\top\mathsf{w}_k\right]\right).
\end{equation}
To obtain $\eta$ and $\beta$, recall the definitions of $\bar X_{\textsf{i}}$ and $\bar X_{\textsf{u}}$ in \eqref{X_0}-\eqref{X_u}, and $X_{\textsf{i}}(\theta_{\textsf{i}})$ and $X_{\textsf{u}}(\theta_{\textsf{u}})$ in \eqref{X_0_theta}-\eqref{X_u_theta}. We know the following inequalities hold $\forall y\in \mathbb{R}^n$ and $Q\in \mathbb{R}^{n\times n}$:
\begin{equation}\label{max_min}
   \lambda_{min}(Q)\|y\|^2 \leq y^\top Q y\leq \lambda_{max}(Q)\|y\|^2,
\end{equation}
where $\lambda_{min}(Q)$ and $\lambda_{max}(Q)$ are the smallest and largest eigenvalues of $Q$, respectively. 

From \eqref{X_0}, and noting that under $\bar X_{\textsf{u}}\cap \bar X_{\textsf{i}}=\emptyset$, $\theta_{\textsf{u}}$ does not affect the evolution of $z\in \bar X_{\textsf{i}}$ , we conclude $z\in \bar X_{\textsf{i}}\Leftrightarrow z\in \{(x,\theta_{\textsf{i}})|x\in X_{\textsf{i}}(\theta_{\textsf{i}})\}$. From the definition of $X_{\textsf{i}}(\theta_{\textsf{i}})$ in \eqref{X_0_theta}, it holds that $\theta_{\textsf{i}}\in X_{\textsf{i}}(\theta_{\textsf{i}})$, hence we can write $z\in \bar X_{\textsf{i}}\Leftrightarrow x\in X_{\textsf{i}}(\theta_{\textsf{i}})$. This means the evolution of $z\in \bar X_{\textsf{i}}$ is equivalent to the evolution of $x \in X_{\textsf{i}}(\theta_{\textsf{i}})$, as expected. It follows that if a $B_{\theta_{\textsf{i}}}$ exists such that $B_{\theta_{\textsf{i}}}(x|x\!\in \!X_{\textsf{i}}(\theta_{\textsf{i}}))\leq \bar\eta(\theta_{\textsf{i}})$, then
\begin{equation*}
\textsf{E}_{\theta}[B(z|z\!\in \!\bar X_{\textsf{i}})]\!\leq p_{\emptyset}\int_{\Theta_{\textsf{i}}}\!\bar\eta(\theta_{\textsf{i}})T_{\textsf{i}}(\theta_{\textsf{i}})d\theta_{\textsf{i}}.
\end{equation*}
Let $B_{\theta_{\textsf{i}}}(x)=x^\top P_x x, \forall x\!\in \!X_{\textsf{i}}(\theta_{\textsf{i}})$. From \eqref{max_min} we have
\begin{align}\label{eqival}
    B_{\theta_{\textsf{i}}}(x)\!\leq \lambda_{max}(P_x)\|x\|^2\leq \lambda_{max}(P_x)\!\sup_{x\in X_{\textsf{i}}(\theta_{\textsf{i}})}\!\|x\|^2.
\end{align}
To compute $\sup_{x\in X_{\textsf{i}}(\theta_{\textsf{i}})}\|x\|^2$ for a general convex shape $R$ for $X_{\textsf{i}}^{\text{no}}$, we define the support function $H_R(\vec{v})=\sup_{x\in R}\vec{v}^\top x$, where $\vec{v}$ is the unit vector along the line that connects $\bar c_{\textsf{i}}$ to the origin. The Euclidean norm of $x$ can be expressed as $\|x\|=\sup_{\|\vec{v}\|=1}\vec{v}^\top x$, hence we can write
\begin{align*}
    \sup_{x\in X_{\textsf{i}}(\theta_{\textsf{i}})}\!\|x\|= \sup_{x\in X_{\textsf{i}}(\theta_{\textsf{i}})}\sup_{\|\vec{v}\|=1}\vec{v}^\top x=\sup_{\|\vec{v}\|=1}H_{X_{\textsf{i}}(\theta_{\textsf{i}})}(\vec{v}),
\end{align*}
where from \eqref{X_0_theta}, we get $H_{X_{\textsf{i}}(\theta_{\textsf{i}})}(\vec{v})=\vec{v}^\top\bar c_{\textsf{i}}+(\bar s_{\textsf{i}}+\theta_{\textsf{i}})H_R(\vec{v})$. We then obtain
\begin{equation}\label{eq:sup_x_0}
    \sup_{x\in X_{\textsf{i}}(\theta_{\textsf{i}})}\!\|x\|=\!\sup_{\|\vec{v}\|=1}[\vec{v}^\top \bar c_{\textsf{i}}+(\bar s_{\textsf{i}}+\theta_{\textsf{i}})H_R(\vec{v})]\triangleq d_{\textsf{i}}^{max}(\theta_{\textsf{i}}).
\end{equation}
From \eqref{eqival} and \eqref{eq:sup_x_0}, we find $\forall x\in X_{\textsf{i}}(\theta_{\textsf{i}})$ that
\begin{equation*}
    B_{\theta_{\textsf{i}}}(x)\leq \lambda_{max}(P_x)d_{\textsf{i}}^{max}(\theta_{\textsf{i}})^2\triangleq \bar\eta(\theta_{\textsf{i}}).
\end{equation*}
Since $p_{\emptyset}=\textsf{P}[ X_{\textsf{i}}(\theta_{\textsf{i}})\cap X_{\textsf{u}}(\theta_{\textsf{u},0})=\emptyset]=1-p^{ol}_0$, we have
\begin{align*}
\textsf{E}_{\theta}&\left[B_{\theta_{\textsf{i}}}(x)\right]=p_{\emptyset}\textsf{E}_{\theta_{\textsf{i}}}\left[B_{\theta_{\textsf{i}}}(x)| X_{\textsf{i}}(\theta_{\textsf{i}})\cap X_{\textsf{u}}(\theta_{\textsf{u},0})=\emptyset\right]\\&\leq p_{\emptyset}\lambda_{max}(P_x)\textsf{E}_{\theta_{\textsf{i}}}\!\left[d_{\textsf{i}}^{max}(\theta_{\textsf{i}})^2|X_{\textsf{i}}(\theta_{\textsf{i}})\cap X_{\textsf{u}}(\theta_{\textsf{u},0})=\emptyset\right]\\
&=p_{\emptyset}\lambda_{max}(P_x)\int_{\Theta_{\textsf{i}}}d_{\textsf{i}}^{max}(\theta_{\textsf{i}})^2 T_{\textsf{i}}(\theta_{\textsf{i}})d\theta_{\textsf{i}}=\eta.
\end{align*}
The integral can be readily computed for any convex shape kernel $R$ and transition kernel $T_{\textsf{i}}$, hence \eqref{eta2} is confirmed.

Since $p_{\theta}\succ 0$, we have $z^\top P z\geq x^\top P_x x$, and we reach to
\begin{align*}
    \|z\|_{P}^2&\geq \|x\|^2_{P_x}\geq\lambda_{min}(P_x)\|x\|^2\geq \lambda_{min}(P_x) \inf_{x}\|x\|^2.
\end{align*}
For all $z\in \bar X_{\textsf{u}}$, we have
\begin{align*}
    B(z)\geq x^\top \!P_x x\geq \lambda_{min}(P_x)\inf_{z\in \bar X_{\textsf{u}}}\|x\|^2.
\end{align*}
With similar reasoning this time for $z\in \bar X_{\textsf{u}}$, the equivalence $z\in \bar X_{\textsf{u}} \Leftrightarrow x\in X_{\textsf{u}}(\theta_{\textsf{u}})$ holds, and the infimum above can then be taken over $x\in X_{\textsf{u}}(\theta_{\textsf{u}})$. To compute the minimum distance between the origin and any point $x\in X_{\textsf{u}}(\theta_{\textsf{u}})$, recall that $\theta_{\textsf{u}}$ is a sequence of i.i.d. random variables, hence, the uncertainty in $X_{\textsf{u}}(\theta_{\textsf{u}})$ evolves with time. We have from \eqref{X_u_theta} that
\begin{equation*}
    \inf_{x\in X_{\textsf{u}}(\theta_{\textsf{u}})}\!\!\|x\|\!=\!\!\sup_{\|\vec{v}'\|=1}\![\vec{v}'^\top \bar c_{\textsf{u}}+(\bar s_{\textsf{u}}+\theta_{\textsf{u}})H_R(-\vec{v}')]\triangleq d_{\textsf{u}}^{min}(\theta_{\textsf{u}}),
\end{equation*}
with $\vec{v}'$ the unit vector along the line connecting $\bar c_{\textsf{u}}$ to the origin\footnote{If we assume the initial set is origin-centered, then $\vec{v}=-\vec{v}'$.}. Hence, $B(z)\geq \lambda_{min}(P_x)d_{\textsf{u}}^{min}(\theta_{\textsf{u}})^2$, and we have
\begin{align*}
\textsf{E}_{\theta}\!&\left[B(z)\right]\geq\\& p_{\emptyset}\lambda_{min}(P_x)\;\textsf{E}_{\theta_{\textsf{u}}}\!\left[d_{\textsf{u}}^{min}(\theta_{\textsf{u}})^2| X_{\textsf{i}}(\theta_{\textsf{i}})\cap X_{\textsf{u}}(\theta_{\textsf{u}})=\emptyset\right].
\end{align*}
Since $\theta_{\textsf{u}}$ evolves with time, we essentially need to take $T$ times integral to compute $\textsf{E}_{\theta_{\textsf{u}}}\!\!\left[d_{\textsf{u}}^{min}(\theta_{\textsf{u}})^2\right]$, however, $\{\theta_{\textsf{u},k}\}_{k}$ are i.i.d., hence they have identical probability transition kernels, and we readily find $\beta$ as
\begin{align*}
\textsf{E}_{\theta}\left[B(z)\right]&\geq p_{\emptyset}\lambda_{min}(P_x)\int_{\Theta_{\textsf{u}}}d_{\textsf{u}}^{min}(\theta_{\textsf{u}})^2 T_{\textsf{u}}(\theta_{\textsf{u}})d\theta_{\textsf{u}}=\beta.
\end{align*}
Since $\beta\leq \mathsf{E}[B(z)]\leq\eta$, $\beta\geq \eta$ holds, and $B(z)=z^\top Pz$ is a CBC for $\bar\Lambda_{\text{SLTI}}$ with respect to $\bar \Upsilon$, and under the control inputs $\{u_k=Lx_k\}_{k=0}^{T-1}$, and the proof is complete.
\qed

\textit{Remark 3.} As observed in \textit{Theorem \ref{thm1}}, conditions \eqref{eta2} and \eqref{beta2} depend only on $P_x$ while condition \eqref{c2} depends on $P_x$ and $P_{\theta}$. This property is the result of transforming the stochastic sets $X_{\textsf{i}}(\theta_{\textsf{i}})$ and $X_{\textsf{u}}(\theta_{\textsf{u}})$ in \eqref{X_0_theta}-\eqref{X_u_theta} to deterministic sets $\bar X_{\textsf{i}}$ and $\bar X_{\textsf{u}}$ in \eqref{X_0}-\eqref{X_u} on the extended space.

\begin{corollary}[Ball-shaped sets] Let the shape kernel $R$ for the initial and unsafe sets be defined as the unit $n$-dimensional ball centered, respectively, at $\bar c_{\textsf{i}}$ and $\bar c_{\textsf{u}}$, and with nominal radii $r_{\textsf{i}}$ and $r_{\textsf{u}}$. Moreover, assume $\theta_{\textsf{i}}\sim |\mathcal N(0,\sigma_{\textsf{i}}^2)|$ and $\theta_{\textsf{u},k}\sim |\mathcal N(0,\sigma_{\textsf{u}}^2)|, \forall k\in[0,T]$, and we have $X_{\textsf{i}}(\theta_{\textsf{i}})=\bar c_{\textsf{i}}\oplus(r_{\textsf{i}}+\theta_{\textsf{i}})$, and $X_{\textsf{u}}(\theta_{\textsf{u},k})=\bar c_{\textsf{u}}\oplus(r_{\textsf{u}}+\theta_{\textsf{u},k})$. Then the CBC conditions in \textit{Theorem \ref{thm1}} reduce to
    \begin{subequations}
    \begin{align}\label{eta3}
        &\eta_b= p_{\emptyset}\lambda_{max}(P_x)\left[\gamma_{\textsf{i}}^2+2\gamma_{\textsf{i}}\sigma_{\textsf{i}}\sqrt{\frac{2}{\pi}}+\sigma_{\textsf{i}}^2\right],\\\label{beta3}
        &\beta_b= p_{\emptyset}\lambda_{min}(P_x)\left[\gamma_{\textsf{u}}^2-2\gamma_{\textsf{u}}\sigma_{\textsf{u}}\sqrt{\frac{2}{\pi}}+\sigma_{\textsf{u}}^2)\right],\\\label{c3}
        & c_b=\textsf{Tr}(P_x I_{n}\sigma_w^2)+ \textsf{Tr}(P_{\theta}\Sigma_{\theta}),
        \end{align}
    \end{subequations}
    with $\gamma_{\textsf{i}}=\|\bar c_{\textsf{i}}\|+r_{\textsf{i}}$, $\gamma_{\textsf{u}}=\|\bar c_{\textsf{u}}\|-r_{\textsf{u}}$, and $\Sigma_{\theta}\!=\!\begin{pmatrix} \!I_{n}\sigma_{\textsf{i}}^2&\!\mathbf{0}\\\!\mathbf{0}&\!I_{n}\sigma_{\textsf{u}}^2\end{pmatrix}$.
\end{corollary}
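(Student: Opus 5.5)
The plan is to specialize the three expressions \eqref{eta2}, \eqref{beta2}, \eqref{c2} of Theorem~\ref{thm1} to the ball kernel, and then evaluate the resulting integrals using the half-normal moments. Since Theorem~\ref{thm1} already establishes that $B(z)=z^\top Pz$ is a $\theta$-averaged CBC with these constants, the only work is computing $d_{\textsf{i}}^{max}$, $d_{\textsf{u}}^{min}$, the integrals against $T_{\textsf{i}},T_{\textsf{u}}$, and the trace in $c$.

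\textbf{Step 1 (geometry).} For $R$ the unit ball, the support function is $H_R(\vec v)=\|\vec v\|=1$ for every unit $\vec v$. Plugging this into \eqref{eq:sup_x_0} gives
\[
d_{\textsf{i}}^{max}(\theta_{\textsf{i}})=\sup_{\|\vec v\|=1}\vec v^\top\bar c_{\textsf{i}}+r_{\textsf{i}}+\theta_{\textsf{i}}=\|\bar c_{\textsf{i}}\|+r_{\textsf{i}}+\theta_{\textsf{i}}=\gamma_{\textsf{i}}+\theta_{\textsf{i}}.
\]
Symmetrically, the distance from the origin to a ball of radius $r_{\textsf{u}}+\theta_{\textsf{u}}$ centered at $\bar c_{\textsf{u}}$ is $\|\bar c_{\textsf{u}}\|-r_{\textsf{u}}-\theta_{\textsf{u}}=\gamma_{\textsf{u}}-\theta_{\textsf{u}}$. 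I would verify this directly rather than through the paper's support-function formula, taking the sign of $H_R(-\vec v')$ so that the radius is subtracted. This step implicitly assumes the origin lies outside the random unsafe ball, which is consistent with the conditioning on non-overlap already used in Theorem~\ref{thm1}.

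\textbf{Step 2 (moments).} Expand the squares:
\[
(\gamma_{\textsf{i}}+\theta_{\textsf{i}})^2=\gamma_{\textsf{i}}^2+2\gamma_{\textsf{i}}\theta_{\textsf{i}}+\theta_{\textsf{i}}^2 ,\qquad (\gamma_{\textsf{u}}-\theta_{\textsf{u}})^2=\gamma_{\textsf{u}}^2-2\gamma_{\textsf{u}}\theta_{\textsf{u}}+\theta_{\textsf{u}}^2 .
\]
Integrate against the half-normal densities $T_{\textsf{i}}$ and $T_{\textsf{u}}$, using $\textsf{E}|\mathcal N(0,\sigma^2)|=\sigma\sqrt{2/\pi}$ and $\textsf{E}|\mathcal N(0,\sigma^2)|^2=\sigma^2$. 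Multiplying by $p_{\emptyset}\lambda_{max}(P_x)$ and $p_{\emptyset}\lambda_{min}(P_x)$ respectively yields \eqref{eta3} and \eqref{beta3}.

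\textbf{Step 3 (the constant $c$).} Use the block structure $\bar D=\mathrm{diag}(I_n,I_{2n})$ and $P=\mathrm{diag}(P_x,P_\theta)$. Interpret $\textsf{E}[\mathsf w_k\mathsf w_k^\top]$ as the block-diagonal covariance $\mathrm{diag}(\sigma_w^2 I_n,\Sigma_\theta)$; this holds because $w_k$ is independent of $\bar\omega_k$. The trace then splits into $\textsf{Tr}(P_x\sigma_w^2)+\textsf{Tr}(P_\theta\Sigma_\theta)$, which is \eqref{c3}.

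The main obstacle is Step 3. The $\theta$-components are half-normal rather than zero-mean, so their second moment is $\sigma^2$, but their covariance is $\sigma^2(1-2/\pi)$. I would keep the second-moment form, since that is what enters $\textsf{E}[\mathsf w^\top\bar D^\top P\bar D\mathsf w]$ in \eqref{cond3}. Cross terms between the $x$- and $\theta$-blocks vanish because $P$ is block diagonal. A cross term $\textsf{E}[\theta_{\textsf{i}}]\textsf{E}[\omega]$ could appear inside the $P_\theta$ block if $P_\theta$ is non-diagonal. To match $\Sigma_\theta$ as stated, I would either assume $P_\theta$ is diagonal or note that the bound only requires an upper estimate.
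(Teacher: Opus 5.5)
Your proposal is correct and follows the paper's route: the paper gives no separate proof, and the corollary is the direct substitution into Theorem~\ref{thm1} that you carry out. That means the ball support function $H_R\equiv 1$ on unit vectors, the half-normal moments $\sigma\sqrt{2/\pi}$ and $\sigma^2$, and a blockwise split of the trace in $c$ read with $\textsf{E}[\mathsf{w}_k\mathsf{w}_k^\top]$. Your sign fix on the radius term in $d_{\textsf{u}}^{min}$ is right, and needed, since the paper's displayed formula with $+H_R(-\vec v')$ gives $\|\bar c_{\textsf{u}}\|+r_{\textsf{u}}+\theta_{\textsf{u}}$. Your remark on the nonzero cross-moment $\frac{2}{\pi}\sigma_{\textsf{i}}\sigma_{\textsf{u}}$ is also right; dropping it is an upper estimate only when the corresponding off-diagonal entries of $P_\theta$ are nonpositive, so keep the diagonal-$P_\theta$ assumption.

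Two caveats. First, the appeal to non-overlap conditioning to ensure $\theta_{\textsf{u}}\le\gamma_{\textsf{u}}$ does not work: that conditioning involves only $\theta_{\textsf{u},0}$, and the integral in \eqref{beta2} runs against the unconditional half-normal density on all of $[0,\infty)$. Second, and as a consequence, $\beta_b$ is exact only for the signed distance $\gamma_{\textsf{u}}-\theta_{\textsf{u}}$, which the paper tacitly uses. The true distance $(\gamma_{\textsf{u}}-\theta_{\textsf{u}})_+$ gives a strictly smaller integral.
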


Next, we state the formal safety probability using the CBC construction results in \textit{Theorem \ref{thm1}}. The proof mainly follows similar steps as in \cite{1428804}, however, we modify the supermartingale according to our modified CBC conditions in \eqref{eq:a}-\eqref{eq:c}, and set uncertainties.

\begin{theorem}\label{thm2}
     Let $\bar\Lambda_{\text{SLTI}}=(\Xi, \mathcal U, \bar A, \bar D, w, \bar\omega)$ be defined as in \eqref{aug_sys_LTI2}, controlled by sequence of inputs $\{u_k=Lx_k\}_{k=0}^{T-1}$. Let a CBC $B(z)$ exists that satisfies the conditions \eqref{eq:a}-\eqref{eq:c}. The probability that any closed-loop trajectory $\{z_k\}_{k=0}^T$ of $\bar\Lambda_{\text{SLTI}}$ from any initial state $z_0\in \bar X_{\textsf{i}}$ does not reach to the set $\bar X_{\textsf{u}}$ over time horizon $[0,T]$ is
     \begin{equation}\label{safe_prob_final}
    p_{safe=}\mathsf{P}_{w,\theta_{\textsf{i}},\theta_{\textsf{u}}}\{\bar\Lambda_{\text{SLTI}}\models \bar\Upsilon\}\geq 1-\frac{\eta+cT}{\beta},
\end{equation}
with $\eta$, $\beta$, and $c$ defined in \eqref{eta2}-\eqref{c2} in \textit{Theorem \ref{thm1}}.
\end{theorem}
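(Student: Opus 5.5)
We follow the classical c-martingale argument of \cite{1428804} (Kushner's inequality for supermartingales with drift), applied on the augmented space $\Xi$. The $\theta$-averaging in \eqref{eq:a}--\eqref{eq:b} is handled by conditioning on the realization of the set parameters and averaging only at the end.

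\textbf{Step 1 (martingale construction).} Fix the closed-loop augmented process $\{z_k\}_{k=0}^T$ generated by \eqref{aug_sys_LTI2} with $u_k=Lx_k$. Define the stopped process $\tilde z_k=z_{k\wedge\tau}$, where $\tau$ is the first hitting time of $\bar X_{\textsf{u}}$, and set $M_k=B(\tilde z_k)-ck$ if $k<\tau$, frozen after $\tau$. By \eqref{eq:c},
\begin{equation*}
\textsf{E}_{\mathsf{w}}[B(z_{k+1})\mid z_k]\le B(z_k)+c,
\end{equation*}
so $\{M_k\}$ is a nonnegative-shifted supermartingale with respect to the natural filtration of $(w,\bar\omega)$. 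Since $B\ge 0$, the standard bound for such $c$-martingales gives
\begin{equation*}
\textsf{P}\Big\{\sup_{0\le k\le T}B(z_k)\ge\beta \,\Big|\, z_0\Big\}\le \frac{B(z_0)+cT}{\beta}.
\end{equation*}
This bound holds for every fixed $z_0$. It is obtained via Doob/Ville's maximal inequality applied to $B(\tilde z_k)+c(T-k)$, which is a nonnegative supermartingale.

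\textbf{Step 2 (linking to the unsafe set).} Reaching $\bar X_{\textsf{u}}$ within $[0,T]$ should imply that $\sup_k B(z_k)\ge\beta$. Here \eqref{eq:b} only gives $\textsf{E}_\theta[B(z)]\ge\beta$ on $\bar X_{\textsf{u}}$, not a pointwise bound.

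\textbf{Step 3 (averaging over the initial parameter).} Take expectation with respect to $\theta_{\textsf{i}}$ and $\theta_{\textsf{u},0}$ over $z_0\in\bar X_{\textsf{i}}$, and use \eqref{eq:a} to replace $\textsf{E}_\theta[B(z_0)]$ by $\eta$. Taking complements then yields \eqref{safe_prob_final}:
\begin{equation*}
p_{safe}\ge 1-\frac{\eta+cT}{\beta}.
\end{equation*}
The constants $\eta,\beta,c$ are supplied by Theorem~\ref{thm1}, which also guarantees that the hypotheses hold under Assumption~2.

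\textbf{Main obstacle.} The hard step is Step~2, because the averaged condition \eqref{eq:b} does not bound $B$ pointwise on $\bar X_{\textsf{u}}$. My plan is to use the structure of the quadratic certificate. Its $x$-part satisfies $B(z)\ge\lambda_{min}(P_x)d_{\textsf{u}}^{min}(\theta_{\textsf{u},k})^2$ pointwise whenever $x_k\in X_{\textsf{u}}(\theta_{\textsf{u},k})$. I would therefore run Step~1 conditionally on the whole parameter sequence $\theta$, with a $\theta$-dependent threshold $\beta(\theta)$. I would then take expectation of the resulting ratio bound, exploiting that $\theta_{\textsf{u},k}$ are i.i.d. and independent of $\theta_{\textsf{i}}$ and $w$, so numerator and denominator factor as in the derivation of \eqref{eta2}--\eqref{beta2}.

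If this factorization fails, the fallback is to interpret \eqref{eq:b} as defining $\beta$ as a pointwise lower bound on the $\theta$-averaged barrier. In that case one applies Step~1 directly to $\bar B(z)=\textsf{E}_\theta[B(z)]$, checking that \eqref{eq:c} is preserved under this averaging. This check should go through by linearity and because $c$ does not depend on $\theta$. Throughout, the restriction to the event $X_{\textsf{i}}(\theta_{\textsf{i}})\cap X_{\textsf{u}}(\theta_{\textsf{u},0})=\emptyset$ is carried by the factor $p_\emptyset$ already absorbed into $\eta$ and $\beta$.
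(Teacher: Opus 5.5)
Your Step~1 (Ville's maximal inequality for $B(z_k)+c(T-k)$) and Step~3 (averaging the numerator over the initial parameter, legitimate because $B(z_0)$ enters linearly) are fine. Step~2, however, is a genuine gap, and neither of your two repairs closes it.

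In your main plan, conditioning on the parameter sequence, a hit at time $k$ only yields $B(z_k)\ge\bar\beta(\theta_{\textsf{u},k})$ with $\bar\beta(\theta_{\textsf{u}})=\lambda_{min}(P_x)d^{min}_{\textsf{u}}(\theta_{\textsf{u}})^2$. The conditional bound is therefore $(B(z_0)+cT)/\min_{0\le k\le T}\bar\beta(\theta_{\textsf{u},k})$. Even granting that independence lets numerator and denominator split, the denominator enters as $\textsf{E}[1/\min_k\bar\beta(\theta_{\textsf{u},k})]$. Jensen gives $\textsf{E}[1/\min_k\bar\beta]\ge 1/\textsf{E}[\min_k\bar\beta]\ge 1/\textsf{E}[\bar\beta(\theta_{\textsf{u},0})]$. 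So the inequality runs the wrong way, and you get something weaker than $(\eta+cT)/\beta$. In the paper's setting, where $\theta_{\textsf{u},k}$ has unbounded support, $d^{min}_{\textsf{u}}$ vanishes with positive probability (or, with the paper's untruncated formula, crosses zero), so this expectation is even infinite.

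The fallback does not help either. Once $\theta$ is averaged out, $\bar B=\textsf{E}_\theta[B]$ is a function of $x$ alone. Using it in Step~1 requires $\bar B(x)\ge\beta$ at every $x$ lying in $X_{\textsf{u}}(\theta_{\textsf{u}})$ for some admissible $\theta_{\textsf{u}}$. That is a worst-case condition over the union of all unsafe sets (here all of $\mathbb{R}^n$), not the average of infima that \eqref{beta2} provides. It is an extra hypothesis, not a consequence of \eqref{eq:b}.

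Nor can Step~2 be closed with the $\beta$ of \eqref{beta2} by other means, because an averaged $\beta$ does not see the lower tail of $d^{min}_{\textsf{u}}(\theta_{\textsf{u},k})$, which is what drives hitting. Since $A_{cl}^\top P_xA_{cl}\preceq P_x$, for small $\sigma_w$ the state stays in a bounded neighbourhood of the initial set. At each step, independently, $X_{\textsf{u}}(\theta_{\textsf{u},k})$ covers that neighbourhood with some probability $q>0$, so the hitting probability is at least about $1-(1-q)^T$. Fix $T$ large, then take $\sigma_w$ and $P_\theta$ small enough that $cT\ll\eta$ (neither $\eta$ nor $\beta$ depends on them). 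This hitting probability then exceeds $(\eta+cT)/\beta$.

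The paper's proof does not supply the missing idea. It simply declares $z_k\in\bar X_{\textsf{u}}$ equivalent to $\textsf{E}_\theta[B(z_k)]\ge\beta$, which is your fallback. It then applies Markov's inequality at a single fixed $k$, together with $\textsf{E}[W(z_k)]\le\textsf{E}[W(z_0)]\le\eta+cT$ for $W(z_k)=B(z_k)+c(T-k)$. That argument only bounds $\textsf{P}[z_k\in\bar X_{\textsf{u}}]$ for each $k$, not the hitting event, so your Ville step is the more appropriate tool.

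What would make your argument rigorous is a deterministic threshold $\beta^\ast=\inf_{z\in\bar X_{\textsf{u}}}B(z)$. The $P_\theta$ block makes it positive: for ball-shaped sets, $\beta^\ast\ge\frac{ab}{a+b}\gamma_{\textsf{u}}^2$ with $a=\lambda_{min}(P_x)$ and $b=\lambda_{min}(P_\theta)$. Your Steps~1 and~3 then give $p_{safe}\ge 1-(\eta'+cT)/\beta^\ast$, where $\eta'$ bounds $\textsf{E}[B(z_0)]$ including its $\theta^\top P_\theta\theta$ part. That is, however, a different bound from \eqref{safe_prob_final}.
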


\textit{Proof.}
Define $W(z_k)=B(z_k)+c(T-k)$. From \textit{Definition 1} and for a CBC $B(z_k)$, we have $\mathsf{E}_{\mathsf{w}}[B(z_{k+1})|z_k]\leq B(z_k)+c, \forall z_k$. We then conclude
\begin{align*}
    \textsf{E}_{\mathsf{w}}[W(z_{k+1})|z_k]&=\textsf{E}_{\mathsf{w}}[B(z_{k+1})|z_k]+c(T-k-1)\\
    &\leq B(z_{k})+c+c(T-k-1)
    \\&=B(z_{k})+c(T-k)=W(z_k),
\end{align*}
Since $\textsf{E}_{\mathsf{w}}[W(z_{k+1})|z_k]\leq W(z_k)$ holds for all $k$, the process $\{W_k\}_{k=0}^T$ is a non-negative supermartingale. Moreover, from condition \eqref{eq:a} we have $\textsf{E}_{\theta}[B(z_0)]\leq \eta, \forall z_0\in \bar X_{\textsf{i}}$, hence, we obtain $\textsf{E}_{\theta}[W(z_0)]=\textsf{E}_{\theta}[B(z_0)]+cT\leq \eta+cT$. 

Define hitting state $h_s\triangleq\{\exists k\in [0,T]:z_k\in \bar X_{\textsf{u}}\}$, then the probability that $\bar\Lambda_{\text{SLTI}}$ becomes unsafe is $p_{unsafe}=\textsf{P}_{\mathsf{w}}[h_s|z_0\in \bar X_{\textsf{i}}]$. Given $z_0\in \bar X_{\textsf{i}}$, we define the  random process $\mathbf{1}_{\bar X_{\textsf{u}}}(z_k)=\begin{cases}
    1, &z_k\in \bar X_{\textsf{u}}\\
    0, & z_k\notin \bar X_{\textsf{u}}
\end{cases}$, which returns only two outcomes. We can write
\begin{equation*}
    p_{unsafe}=\textsf{E}\left[\mathbf{1}_{\bar X_{\textsf{u}}}(z_k)\right]=1\times \textsf{P}[z_k\in \bar X_{\textsf{u}}]+ 0\times \textsf{P}[z_k\notin \bar X_{\textsf{u}}].
\end{equation*}
From condition \eqref{eq:b}, $z_k\in \bar X_{\textsf{u}}$ is equivalent with $\mathsf{E}_{\theta}[B(z_k)]\geq \beta$. Therefore, the following holds
\begin{align*}
    p_{unsafe}=\textsf{P}[z_k\in \bar X_{\textsf{u}}]=\textsf{P}\left[\mathsf{E}_{\theta}[B(z_k)]\geq \beta\right]\leq \frac{\mathsf{E}\left[\mathsf{E}_{\theta}[B(z_k)]\right]}{\beta},
\end{align*}
where the right-hand side expression is obtained by applying Markov's inequality. From the definition of $W(z_k)$, we know $\textsf{E}[W(z_k)]\geq \textsf{E}[B(z_k)]$, hence it follows that
\begin{equation*}
        p_{unsafe}\leq \frac{\mathsf{E}\left[\mathsf{E}_{\theta}[B(z_k)]\right]}{\beta}\leq \frac{\mathsf{E}\left[\mathsf{E}_{\theta}[W(z_k)]\right]}{\beta}.
\end{equation*}
Since $\{W_k\}_{k=0}^T$ is a non-negative supermartingale, the monotonicity property holds as $\textsf{E}[W(z_k)]\leq \textsf{E}[W(z_{k-1})]\leq \ldots \leq\textsf{E}[W(z_{0})]$, hence $\textsf{E}[W(z_k)]\leq \textsf{E}[W(z_0)]$. Then we have 
\begin{equation*}
        p_{unsafe}\leq \frac{\mathsf{E}\left[\mathsf{E}_{\theta}[W(z_k)]\right]}{\beta}\leq \frac{\mathsf{E}\left[\mathsf{E}_{\theta}[W(z_0)]\right]}{\beta}\leq\frac{\eta+cT}{\beta}.
\end{equation*}
The probability of never hitting the unsafe set is the complementary probability of $p_{unsafe}$, hence,  $\mathsf{P}_{w,\theta_{\textsf{i}},\theta_{\textsf{u}}}\{\bar\Lambda_{\text{SLTI}}\models \bar\Upsilon\}=p_{safe}\geq 1-\frac{\eta+cT}{\beta}$, and proof is complete. \qed

\section{Simulation Results}
\label{sec:simulations}

We consider the model of an RLC circuit as a discrete-time stochastic system with the following dynamics (\cite{akbarzadeh2025})
\begin{equation}\label{sym_model}
			\begin{pmatrix} x^1_{k+1}\\x^2_{k+1}\end{pmatrix}\!=\!\begin{pmatrix} 1-\frac{\Delta R}{L} &\;\;-\frac{\Delta}{L}\\ \!\!\!\frac{\Delta}{C}&\;\;\;\;\;1\end{pmatrix}\!\begin{pmatrix} x^1_{k}\\x^2_{k}\end{pmatrix}\!+\!\begin{pmatrix} u^1_{k}\\u^2_{k}\end{pmatrix}\!+\!\begin{pmatrix} w^1_{k}\\w^2_{k}\end{pmatrix},
\end{equation}
where $x^1_{k}$ and $x^2_{k}$ represent the current and voltage of the circuit, respectively, $\Delta = 0.05$ is the sampling time, the series resistance, inductance, and capacitance are, respectively, $R = 2$, $L = 9$, and $C = 0.5$, and $\{w^1_{k},w^2_{k}\}_{k=0}^T \sim \mathcal{N}(0,\sigma_w^2)$. The state space of the system is $X = [-4,10]\times [-4,10]$, the nominal initial set is $X_{\textsf{i}}^{\text{no}} = \textsf{Ball}((0,0),0.4)$, and the nominal unsafe set is $X_{\textsf{u}}^{\text{no}}= \textsf{Ball}((4,4),1)$. The initial and unsafe sets are associated with random uncertainties $\theta_{\textsf{i}}\sim|\mathcal N(0,\sigma_{\textsf{i}}^2)|$ and $\{\theta_{\textsf{u},k}\}_{k=0}^T\sim|\mathcal N(0,\sigma_{\textsf{u}}^2)|$, respectively. The aim is to construct a CBF and the corresponding safety controller ensuring that the system states starting from $X_{\textsf{i}}(\theta_{\textsf{i}})$ remain within the safe set $X \backslash X_{\textsf{u}}(\theta_{\textsf{u}})$ over the time horizon $[0,T]$, where we set the time duration of system evolution to $T=50$ time-steps.

First, we compute a pair $(L, P)$ that satisfies \textit{Assumption~2} for the given system dynamics in \eqref{sym_model} with the augmented state $z_k=[x_k^1,x_k^2,\theta_{\textsf{i}}, \theta_{\textsf{u},k}]^\top$, as follows:
\begin{align*}
    L&=\begin{pmatrix}
        -0.0337 &  -0.0400\\-0.0401  & -0.0476
    \end{pmatrix}, \; P_x=\begin{pmatrix}
         0.0133   &     0\\0  &  0.0120
    \end{pmatrix},\\ P_{\theta}&=\begin{pmatrix}
         10^{-6}   &     0\\0  &  1.57\!\times\!10^{-4} \;\sigma_{\textsf{i}}^{(\frac{1}{\sigma_{\textsf{i}}}-1)/\sigma_{\textsf{i}}^{1.9}}\!
    \end{pmatrix}.
\end{align*}
It is straightforward to check that above parameters satisfy $A_{cl}^\top P_xA_{cl}-P_x\preceq 0$ and $P_{\theta}\succ 0, \forall \sigma_{\textsf{i}}>0$, as in \textit{Theorem \ref{thm1}}.

The uncertain initial and unsafe sets are $X_{\textsf{i}}(\theta_{\textsf{i}})=\textsf{Ball}((0,0),0.4+\theta_{\textsf{i}})$, and $X_{\textsf{u}}(\theta_{\textsf{u},k})=\textsf{Ball}((4,4),1+\theta_{\textsf{u},k})$, and we consider only positive random variables $\theta_{\textsf{i}}$ and $\theta_{\textsf{u},k}, \forall k$, to account for the non-trivial scenario wherein uncertainties lead to inflating the initial and unsafe sets.

\begin{figure}[b!]\vspace{3mm}
    \centering
    \includegraphics[width=.9\linewidth]{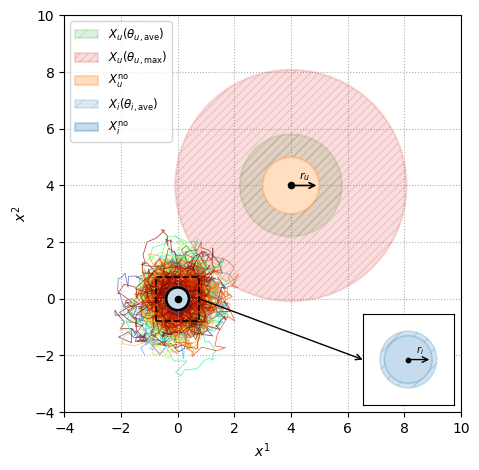}\vspace{-3mm}
    \caption{Monte Carlo simulation of system trajectories and set variations with 200 trajectory samples, and $\sigma_w=0.2$, $\sigma_{\textsf{i}}=0.1$, $\sigma_{\textsf{u}}=1$. Safety probability over 50 time-steps is computed as $99.90\%$, while the analytical lower-bound on safety probability is $70.66\%$.}
    \label{fig:sample_traj}
\end{figure}

Fig.~\ref{fig:sample_traj} demonstrates Monte Carlo simulation outcomes for a sample scenario with $\sigma_w=0.2$, $\sigma_{\textsf{i}}=0.1$, and $\sigma_{\textsf{u}}=1$, where 200 trajectories starting from the set $X_{\textsf{i}}(\theta_{\textsf{i}})$ evolve for $50$ time-steps, where the desired control objective is to maintain trajectories of system \eqref{sym_model} within the nominal initial set. For each of the 200 sample trajectories, we draw a $\theta_{\textsf{i}}$ and the initial states $x_0^1$ and $x_0^2$ are selected by a uniform distribution over the set $X_{\textsf{i}}(\theta_{\textsf{i}})$. The unsafe set $X_{\textsf{u}}(\theta_{\textsf{u},k})$ is determined at each time $k$ by the realization of the random variable $\theta_{\textsf{u},k}$. We observe that the nominal initial (solid blue circle) and unsafe (solid orange circle) sets are enlarged by the uncertainties, which affect system's safety, i.e., the probability that trajectories hit the unsafe set. The pink cross-hatched circle shows the maximum unsafe set, and the green cross-hatched circle is the unsafe set averaged over 200 samples. Moreover, the blue cross-hatched circle demonstrates the average initial set over all 200 samples. The analytical safety probability for this specific scenario, as in \textit{Theorem~\ref{thm2}}, is $p_{\text{safe}}>70.66\%$, with $\eta=0.003109$, $\beta=0.183054$, and $c=0.001012$.

Fig.~\ref{fig:safprob} depicts comparisons between safety probabilities obtained from simulations (each averaged over 20000 samples with normal $95\%$ confidence interval) and the analytical lower-bound for safety probabilities, for a range of system and initial/unsafe set uncertainties with $\sigma_w=\{0.01, 0.05, 0.1,0.15,0.2\}, \;\sigma_{\textsf{i}}=\{0.5,0.75,1.0,1.5, 1.75\}$, and $\sigma_{\textsf{u}}\!=\!\{0.1, 0.5, 0.75, 1.0, 1.5, 1.75\}$. In Figs.~\eqref{fig:safprob_sigma0_0_50}-\eqref{fig:safprob_sigma0_1_75}, we observe that our obtained analytical safety probabilities are valid lower-bounds for the simulative probabilities for each trio of $\{\sigma_w, \sigma_{\textsf{i}}, \sigma_{\textsf{u}}\}$. 

Safety probabilities for the system in \eqref{sym_model} with the nominal safety specifications (i.e., $\sigma_{\textsf{i}}=\sigma_{\textsf{u}}=0$), and in the presence of system uncertainty $(w_k^1,w_k^2)$ are $\{0.991, 0.979, 0.943,0.882,0.7973\}$, respectively for $\sigma_w=\{0.01, 0.05, 0.1,0.15,0.2\}$. Compared to safety probabilities in Figs.~\eqref{fig:safprob_sigma0_0_50}-\eqref{fig:safprob_sigma0_1_75}, we observe that uncertainty in specifications notably change safety properties, hence any formal result on safety should account for such uncertainties.

As it can be seen in Figs.~\eqref{fig:safprob_sigma0_0_50}-\eqref{fig:safprob_sigma0_1_75}, the analytical lower-bounds become more conservative as $\sigma_w$ and $\sigma_{\textsf{i}}$ increase, where the conservativeness with respect to $\sigma_w$ is more notable. The main reason is that we consider a quadratic CBF that considers worst-cases system uncertainties $w_k^1, w_k^2$ (always positive), while in the simulations they can take negative values as well. 

\textit{Remark 4.} An interesting result of this work is that extending the state space, as discussed in Section~\ref{sec:ext_space}, allows us to flexibly design $P_{\theta}\succ 0$. Since the lower-bound for the safety probability in \textit{Theorem \ref{thm2}} depends on $P_{\theta}$ due to extending the state space, we can design $P_{\theta}$ such that it reduces the conservativeness of the lower-bounds. Clearly, no unique $P_{\theta}$ could manage to reduce conservativeness for all ranges of uncertainties, however, there is no constraint on $P_{\theta}$ to be static as long as it remains positive definite.
\begin{figure}[t]
    \centering
    
    \begin{subfigure}{0.24\textwidth}
        \centering
        \includegraphics[width=\linewidth]{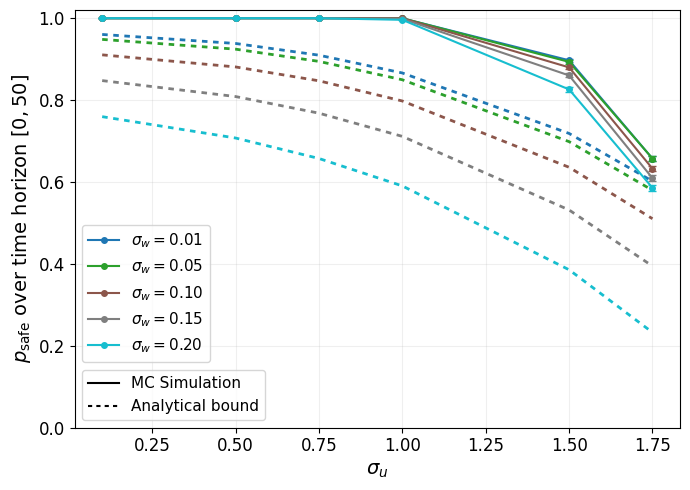}\vspace{-1mm}
        \caption{$\sigma_{\textsf{i}} = 0.5$}
        \label{fig:safprob_sigma0_0_50}
    \end{subfigure}
    \hfill
    \begin{subfigure}{0.24\textwidth}
        \centering
        \includegraphics[width=\linewidth]{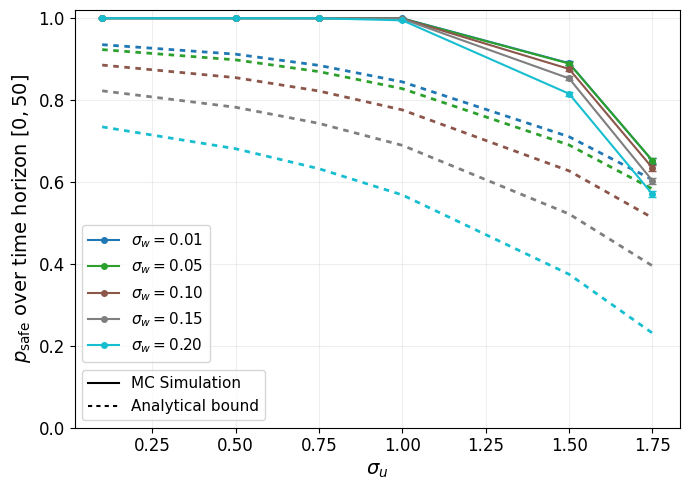}\vspace{-1mm}
        \caption{$\sigma_{\textsf{i}} = 0.75$}
        \label{fig:safprob_sigma0_0_75}
    \end{subfigure}
    
    \vspace{0.4cm}
    
    \begin{subfigure}{0.24\textwidth}
        \centering
        \includegraphics[width=\linewidth]{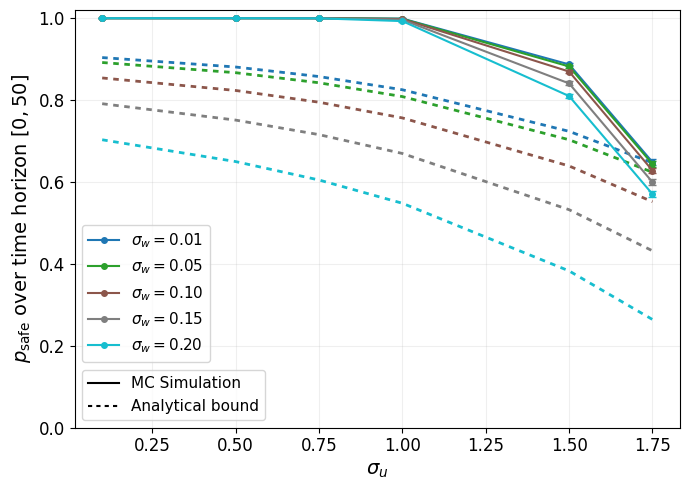}\vspace{-1mm}
        \caption{$\sigma_{\textsf{i}} = 1.0$}
        \label{fig:safprob_sigma0_1_00}
    \end{subfigure}
    \hfill
    \begin{subfigure}{0.24\textwidth}
        \centering
        \includegraphics[width=\linewidth]{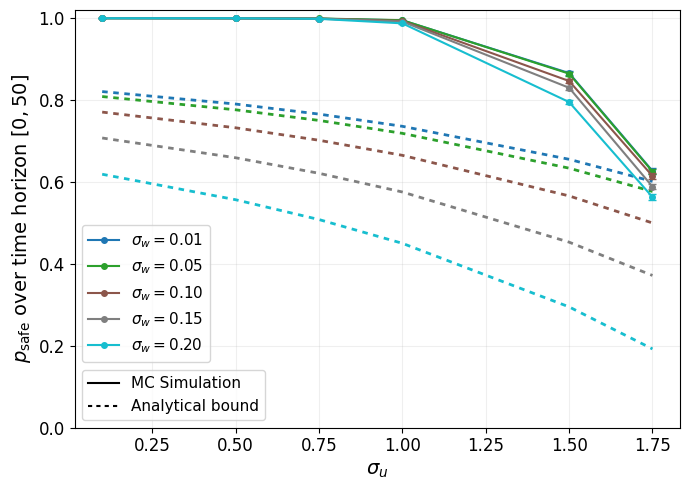}\vspace{-1mm}
        \caption{$\sigma_{\textsf{i}} = 1.5$}
        \label{fig:safprob_sigma0_1_50}
    \end{subfigure}
    
    \vspace{0.4cm}
    
    \begin{subfigure}{0.25\textwidth}
        \centering
        \includegraphics[width=\linewidth]{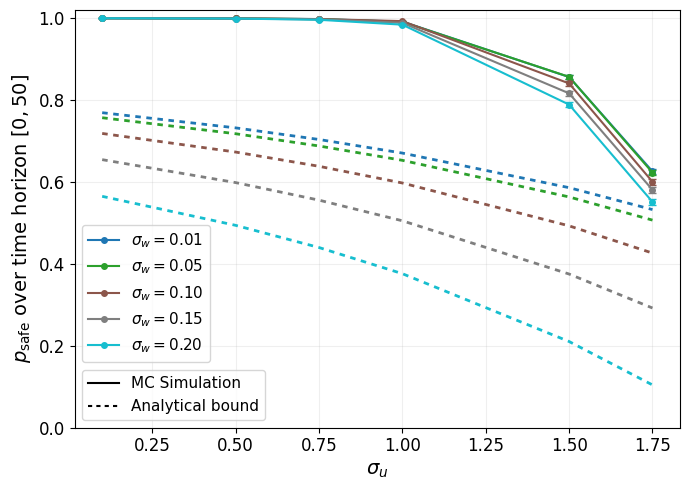}\vspace{-1mm}
        \caption{$\sigma_{\textsf{i}} = 1.75$}
        \label{fig:safprob_sigma0_1_75}
    \end{subfigure}
    \caption{Analytical (dashed) vs. Monte Carlo (solid) safety probabilities for various ranges of $\{\sigma_w, \sigma_{\textsf{i}}, \sigma_{\textsf{u}}\}$.}
    \label{fig:safprob}
\end{figure}

\section{Conclusion}
In this paper, we study the application of barrier certificates to satisfy temporal logic specifications with randomly evolving predicates on stochastic dynamical systems. By introducing an augmented system that includes the states of the dynamical system and the random processes in the predicates, evolving on the extended product space,, we reformulate the satisfaction of uncertain temporal specifications in the system's state space as the satisfaction of deterministic specifications on the extended space. We then apply barrier certificates on the specific class of linear time-invariant stochastic systems to analytically derive probabilistic safety bounds on the system's trajectories. We validate our analytical safety bounds by comparing them with simulations for a wide range of stochasticity in the predicates of the temporal specifications.



\section*{DECLARATION OF AI-ASSISTED TECHNOLOGIES IN THE WRITING PROCESS}
During the preparation of this work, the authors exploited GPT-5 by OpenAI in order to improve
writing style. After using this tool, the authors
reviewed and edited the content as needed and take full responsibility for the content of the publication.

\bibliography{ifacconf}             
                                                   







\end{document}